\newtheorem{theorem}{Theorem}[section]   
\newtheorem{corollary}[theorem]{Corollary}  
\newtheorem{lemma}[theorem]{Lemma}
\DeclareSymbolFont{symbolsC}{U}{txsyc}{m}{n}
\DeclareMathSymbol{\strictif}{\mathrel}{symbolsC}{74}              
\newcommand{\mb}[1]{\mathbf{#1}}
   \newcommand{\A}{\mb{A}}
   \newcommand{\B}{\mb{B}}
      \newcommand{\bH}{\mb{H}}
         \newcommand{\bS}{\mb{S}}
            \newcommand{\bP}{\mb{P}}
   \newcommand{\Pu}{\mb{P}_U}
\newcommand{\CC}{\mathcal{C}}  
\newcommand{\K}{\mathcal{K}}  
\newcommand{\V}{\mathcal{V}}  
\def\a{\alpha}
\def\b{\beta}
\def\om{\omega}
\def\sig{\sigma}
\def\ab #1{|#1|}
\def\bo{\mathop{\scalerel*{\Box}{gX}}}           
\def\di{\mathord{\scalerel*{\Diamond}{gX}}}     
\def\C{\mathbb{C}}
\def\CC{\mathscr{C}}
\def\cl{\mathop{\mathsf{Cl}}}   
\def\de{\mathop{\mathsf{De}}}   
\def\D{\mathbb{D}}
\def\F{\mathcal{F}}
\renewcommand{\ge}{\geqslant}       
\renewcommand{\le}{\leqslant} 
\def\int{\mathop{\sf Int}} 
\def\M{\mathcal{M}}
\def\liff{\enspace \text{iff}\enspace}
\def\P{\mathbb{P}}
\def\ph{\varphi}
\def\<{\langle}
\def\>{\rangle}
\def\sees{\rightsquigarrow}
\def\sub{\subseteq}
\def\Var{\mathop{\mathsf{Var}}} 
\def\Vec#1#2{#1_0,\ldots{},#1_{#2}}
\def\ws{\mbox{E}}
\begin{document}

\title{Modal Logics that Bound the Circumference\\ of Transitive Frames.}
\author{Robert Goldblatt\thanks{School of Mathematics and Statistics, Victoria University of Wellington, New Zealand.
{\tt sms.vuw.ac.nz/\~{}rob}}}

\date{}    
\maketitle


\abstract{
For each natural number  $n$ we study the modal logic determined by the class of transitive Kripke frames in which there are no cycles of length greater than $n$ and no strictly ascending chains. The case $n=0$ is the G\"odel-L\"ob provability logic. Each logic is axiomatised by adding a single axiom to K4, and is shown to have the finite model property and be decidable.
\\
\strut\quad We then consider a number of extensions of these logics, including restricting to reflexive frames to obtain a corresponding sequence of extensions of S4. When $n=1$, this gives  the famous logic of Grzegorczyk, known as S4Grz, which is the strongest modal companion to intuitionistic propositional logic.
A topological semantic analysis shows that the $n$-th member of the sequence of extensions of S4 is the logic of 
hereditarily $n+1$-irresolvable spaces when the modality $\Diamond$ is interpreted as the topological closure operation.
We also study the definability of this class of spaces under the interpretation of $\Diamond$ as the derived set (of limit points) operation.
\\
\strut\quad
The variety of modal algebras validating the $n$-th logic is shown to be generated by the powerset algebras of the finite frames with cycle length bounded by $n$. Moreover each algebra in the variety is a model of the universal theory of the finite ones, and so is embeddable into an ultraproduct of them. }



\section{Algebraic Logic and Logical Algebra}      
The field of algebraic logic has been described as having two main aspects (see the introductions to \citealt{daig:stud74} and \citealt*{andr:alge01}). One is the study of algebras arising from logical ideas. The other is the study of logical questions by algebraic methods.

Both aspects are well exemplified in the profound research of Hajnal Andr\'eka and Istv\'an N\'emeti.
Together, and in collaboration with many colleagues, they have created a prodigious body of literature about Boolean algebras, cylindric algebras, polyadic algebras, relation algebras, fork algebras, modal algebras, dynamic algebras, Kleene algebras and others; with applications to questions of definability, axiomatizability, interpolation, omitting types, decidability etc.\ for a range of logics.

Concerning the first aspect, there is no restriction on the methods that may be used to study abstract algebras. Often the work is algebraic, but it may also involve, say, topology or set theory. Or logic itself. The study of algebraic questions by logical methods, a kind of converse to the first second aspect, might be called \emph{logical algebra}.

One of the aims of the present paper is to provide an illustration of logical algebra at work. In the final section we show that some varieties of modal algebras, built from certain finite graphs with bounded circumference, have the property that each member of the variety is embeddable into an ultraproduct of finite members. The logical proof of this structural result involves an adaptation of a construction developed to show that certain modal logics have the finite model property under their Kripke semantics, as well as an analysis of the behaviour of the universal sentences satisfied by the algebras involved.

The initial impetus for this study came from reflection on a property of the well known modal logic of Grzegorczyk, which is characterised by the class of finite partially ordered Kripke frames. A partial order can be described as a quasi-order (reflexive transitive relation) that has circumference equal to 1, where the circumference is the longest length of any cycle. This suggests a natural question: what modal logics are characterised by frames with circumference at most $n$ for an arbitrary natural number $n$? Dropping reflexivity and considering transitive frames, the answer is already known for two cases. For $n=0$ it is the G\"odel-L\"ob modal logic of provability, and for $n=1$ it is a version of Grzegorczyk's logic without reflexivity. Here we will provide a systematic answer for all $n$, giving in each case an axiomatisation of the logic concerned and showing it has the finite model property. We then discuss topological semantics for these logics, and finally turn to  algebra and take up the  matters mentioned in the previous paragraph.

The next section provides more background  on these ideas, as preparation for the technical work to follow.

\section{Grzegorczyk and L\"ob}

\citet{grze:some67} defined a modal logic, which he called G, by adding to S4 the axiom
\begin{equation} \label{Gaxiom}
((p\strictif\bo q)\strictif\bo q)\land ((\neg p\strictif\bo q)\strictif\bo q))\strictif\bo q,
\end{equation}
where $\strictif$ denotes  strict implication, i.e.\ $\ph\strictif\psi$ is $\bo(\ph\to \psi)$. He showed that G is a modal companion to intuitionistic propositional logic, meaning that the latter is embedded conservatively into G by the G\"odel-McKinsey-Tarski translation.

A few years earlier, \citet{sobo:fami64} had defined a logic K1.1 by adding to S4 the axiom
$$
((p \strictif \bo p) \strictif p)) \strictif p,
$$
which he called J1. This was an adaptation of 
$$
((p \strictif \bo p) \strictif  p)) \to(\di\bo p \to p),
$$
which was in turn a simplification by Geach of  
$$
((p \strictif \bo p) \strictif \bo p)) \to(\di\bo p \to\bo p),
$$ 
which had been discovered in 1958 by Dummett as an example of a formula that is not a theorem of S4.3 but is valid under Prior's Diodorean temporal interpretation of necessity in discrete linear time  (see \citealp[p.139]{prio:tens62} and \citealp[p.29]{prio:past67}).

\citet{sobo:cert70} showed that K1.1 is weaker than G, by deriving J1 from Grzegorczyk's G-axiom \eqref{Gaxiom}. He raised the issue of whether K1.1 was \emph{strictly} weaker, suggesting that this was `very probable'. However 
\citet[Section II.3]{sege:essa71} proved that K1.1 and G are the same logic, by showing that K1.1 is determined by the class of finite partially ordered Kripke frames and observing that  \eqref{Gaxiom} is valid in all such frames, hence derivable in K1.1.
Segerberg axiomatised K1.1 as S4 plus
\begin{equation}  \label{grz}
\bo(\bo(p\to\bo p)\to p)\to p,
\end{equation}
which is equivalent to J1 over S4. He gave the name `Grz' to axiom \eqref{grz} in honour of Grzegorczyk, 
and  K1.1/G has been known ever since as S4Grz.

The difference between S4 and S4Grz can be understood in terms of the distinction between \emph{quasi}-ordered and \emph{partially} ordered frames. A quasi-order $(W,R)$ is a reflexive transitive relation $R$ on a set $W$. It is a partial order if in addition it is \emph{antisymmetric}: $xRyRx$ implies $x=y$. The condition `$xRyRx$' defines an equivalence relation on $W$ whose equivalence classes are known as \emph{clusters}.  $R$ is universal on a given cluster and maximally so.
It lifts to a partial order of the set of clusters by specifying that $CRC'$ iff $xRy$ where $x$ is any member of cluster $C$ and $y$ is any member of cluster $C'$. The original relation $R$ is a partial order iff each cluster contains just a single element.

In these terms, S4 has a number of characterisations. It is the logic of all quasi-orders, of all partial orders, and of all finite quasi-orders.  But it is not the logic of all  \emph{finite partial} orders since, as mentioned above, that logic is S4Grz. The Grz-axiom \eqref{grz} is valid in any finite partial order, but invalid in some infinite partial orders. The precise situation is that Grz is valid in a frame $(W,R)$ iff it is a partial order that has no strictly ascending chains, i.e.\ no sequences $x_0R\cdots Rx_nRx_{n+1}R\cdots$ such that  $x_{n+1}{ R}x_n$ fails for all $n$. A finite quasi-order has no such chains, so validates Grz iff it is antisymmetric.

For $n\ge 1$, an \emph{$n$-cycle} is given by a sequence  $x_0,\dots,x_{n-1}$ of  $n$ distinct points that have
 $x_0R\cdots Rx_{n-1}Rx_{0}$.  The points of a cycle all belong to the same cluster, and in a finite frame the length of a longest cycle is equal to the size of a largest cluster. This maximum length/size is the \emph{circumference} of the frame.
 
Our interest in this paper is in relaxing the antisymmetry property of partial orders that constrains clusters to be singletons and cycles to be of length 1. What logic results if we allow cycles of length  up to two, or three etc.? Also we wish to broaden the context to consider transitive frames that may have irreflexive elements, and clusters that may consist of a single such element. So instead of S4, we work over the weaker K4, which is the logic of all transitive frames. That allows us to admit a circumference of 0, since in a finite transitive \emph{irreflexive} frame there are no cycles at all. The logic characterised by such frames has been well studied: it is the smallest normal logic to contain the  L\"ob axiom
\begin{equation}  \label{lobaxiom}
\bo(\bo p\to p)\to\bo p.
\end{equation}
The proof of this fact is also due to  \citet[Section II.2]{sege:essa71}, who called the axiom W and the logic KW. Later \cite{solo:prov76} showed that it is precisely the modal logic that results when $\bo$ is interpreted as expressing provability in first-order Peano arithmetic. It was \cite{lob:solu55} who showed that \eqref{lobaxiom} is valid under this interpretation. The logic is now often called the G\"odel-L\"ob logic, or GL.

For each natural number  $n$ we will define an axiom $\C_n$ which is valid in precisely those transitive frames that have no strictly ascending chains and no cycles (or clusters) with more than $n$ elements. We prove that the logic of this class of frames is axiomatisable as the system K4$\C_n$. The proof uses the familiar technology
of filtration of canonical models and then modification of the filtration to obtain a finite model with the desired properties. Here the modification involves  `breaking up' clusters that contain too many elements, hence destroying cycles that are too long. It establishes that K4$\C_n$ has the finite model property, being the logic of the class of finite transitive frames that have circumference at most $n$, and is a decidable logic. From this we conclude that the logics $\{\mathrm{K4}\C_n:n\ge 0\}$ form a strictly decreasing sequence   of extensions of K4 whose intersection is K4 itself.
The analysis is then adapted to some extensions of K4$\C_n$ obtained by adding the axioms corresponding to seriality, reflexivity and connectedness of the relation $R$.

To indicate the nature of the axioms $\C_n$, we remark first that $\C_0$ is equivalent over all frames to the formula
\begin{equation} \label{dualob}
\di p\to \di(p\land\neg \di p),
\end{equation}
which is itself equivalent  to the  L\"ob axiom \eqref{lobaxiom}.

To explain $\C_1$, observe that Grz is equivalent over S4 to
\begin{equation} \label{grzL}
\text{Grz}_{\Box}: \quad   \bo(\bo(p\to\bo p)\to p)\to \bo p,
\end{equation}
which can be equivalently expressed in terms of $\di$ as
\begin{equation} \label{digrz}
 \di p\to \di(p\land\neg \di(\neg p\land \di p)).
\end{equation}
Replacing $\neg p$ here by a variable which is hypothesised to be incompatible with $p$, we are led  to define $\C_1$ to be the two-variable formula
$$
\bo\nolimits^*\neg(p\land q)\to (\di p\to \di(p\land\neg \di(q\land \di p)))
$$
(where in general $\bo^*\ph$ is $\ph\land\bo\ph$).
This will be shown to be  equivalent to \eqref{digrz}, hence to Grz$_\Box$, over K4. Frames validating the logic K4$\C_1$ have only singleton clusters, some of which may contain irreflexive elements. The finite frames of this kind determine the logic $\mathrm{K4Grz}_{\Box}$, as was  shown by \cite{amer:cutf96} using tableaux techniques, and then by \cite{gabe:topo04} using a filtration method. This logic is equal to K4$\C_1$, while
Grzegorczyk's logic disallows irreflexivity and is equal to S4$\C_1$.
The logic K4$\C_1$ was studied under the name K4.Grz by \cite{gabe:topo04} and \cite{esak:moda06}, and has been called the \emph{weak Grzegorczyk} logic (wGrz) by \cite{lita:non07,lita:cons14}. \cite{amer:cutf96} calls it G$_0$, while
\cite{gore:cut14} call it \emph{Go}.

 Lifting the pattern of $\C_1$ to three variables $p_0,p_1,p_2$, we define $\C_2$ to be
$$
\bo\nolimits^*\big(\bigwedge\nolimits_{ i<j\le 2}\neg(p_i\land p_j) \big) \to
(\di p_0\to \di(p_0\land\neg\di(p_1\land\di(p_2\land\di p_0))) ).    
$$
$\C_n$ extends the pattern to $n+1$ variables, and will be formally defined in Section \ref{secmodels}.

After completing our model-theoretic analysis we turn to topological semantics and show that for all $n\geq 1$, the logic S4$\C_n$ is characterised by validity in the class of all (finite) \emph{hereditarily $n+1$-irresolvable} topological spaces when 
$\di$ is interpreted as the operation of topological closure. Hitherto this characterisation was known only for $n=1$, i.e.\ for S4Grz. We then discuss the interpretation of $\di$ as the derived set operation, assigning to each set its set of limit points, and show that  a space also validates $\C_n$ under this interpretation iff it is hereditarily $n+1$-irresolvable.

In the final section we study the variety $\V_n$ of all modal algebras validating the logic K4$\C_n$. This is generated by its finite members, and indeed by the class $\CC_n^+$ of all powerset algebras of the class $\CC_n$ of all finite transitive frames of circumference at most $n$. Thus $\V_n$ is the class of all models of the set of equations satisfied by $\CC_n^+$. But we show something stronger: $\V_n$ is the class of all models of the set of universal sentences satisfied by $\CC_n^+$. 
It follows that every member of $\V_n$ can be embedded into an ultraproduct of members of $\CC_n^+$. 

\section{Clusters and Cycles}

A \emph{frame} $\F=(W,R)$ is a directed graph, consisting of a binary relation $R$ on a set $W$. A point $x\in W$ is \emph{reflexive} if $xRx$, and \emph{irreflexive} otherwise. If  every member of $W$ is (ir)reflexive, we say that $R$ and $\F$ are  (ir)reflexive. $\F$ is \emph{transitive} when $R$ is a transitive relation.

For the most part we work with transitive frames and informally give $R$ a temporal interpretation, so that if
$xRy$  we may say that $y$ is an  \emph{$R$-successor} of $x$, that $y$ comes \emph{$R$-after} $x$, or is
\emph{$R$-later} than $x$, etc. If $xRy$ but not $yRx$, then $y$ is \emph{strictly} $R$-later than $x$.

In a transitive frame, a \emph{cluster} is a subset $C$ of $W$ that is an equivalence class under the equivalence relation
$
\{(x,y):x=y\text{ or } xRyRx\}.
$
A singleton $\{x\}$ with $x$ irreflexive is a \emph{degenerate} cluster. All other clusters are \emph{non-degenerate}: if $C$ is non-degenerate then it contains no irreflexive points and the relation $R$ is universal on $C$ and maximally so. 
A \emph{simple} cluster is non-degenerate with one element, i.e.\ a singleton $\{x\}$ with $xRx$.

Let $C_x$ be the $R$-cluster containing $x$. Thus $C_x=\{x\}\cup\{y:xRyRx\}$. The relation $R$ lifts to a well-defined relation on the set of clusters   by putting $C_xRC_y$ iff $xRy$. This relation is transitive and \emph{antisymmetric}, for if 
$C_xRC_yRC_x$, then $xRyRx$ and so $C_x=C_y$. A cluster $C_x$ is \emph{final} if it is maximal in this ordering, i.e.\ there is no cluster $C\ne C_x$ with $C_xR C$. This is equivalent to requiring that $xRy$ implies $yRx$.

We distinguish between finite and infinite sequences of $R$-related points.
An \emph{$R$-path} in a frame is a finite sequence $x_0,\dots,x_n$  of (not necessarily distinct) points from $W$ with $x_mRx_{m+1}$ for all $m<n$. 
An \emph{ascending $R$-chain} in a frame is an infinite sequence $\{x_m:m<\om\}$ of (not necessarily distinct) points from $W$ with $x_mRx_{m+1}$ for all $m<\om$. If $R$ is transitive, this implies $x_mRx_k$ whenever $m<k$. The chain is \emph{strictly ascending} if not $x_{m+1}{R}x_{m}$ for all $m$, hence for transitive $R$, not $x_{k}{R}x_{m}$ whenever $m<k$. Observe that if $x$ is a reflexive point then the constant infinite sequence $x,\dots,x,\dots$ is an ascending $R$-chain that is not strict.
In a transitive frame, a strictly ascending  chain has all its terms $x_m$ being  pairwise distinct, so there are infinitely many of them.

\begin{lemma} \label{const}
The following are equivalent for any transitive frame $\F=(W,R)$:
\begin{enumerate}[\rm(1)]
\item 
There are no strictly ascending chains of points in $\F$.
\item
Any ascending chain $C_0RC_1R\cdots\cdots$   of $R$-clusters is \textbf{ultimately constant} in the sense that there exists an $m$ such that $C_m=C_k$ for all $k>m$.
\end{enumerate}
\end{lemma}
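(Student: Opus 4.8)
The plan is to prove the two implications separately, each by contraposition, drawing on three facts already recorded above: the lifting $C_xRC_y\liff xRy$ to clusters is well defined; the lifted relation is transitive and \emph{antisymmetric}, so any two \emph{distinct} clusters occurring in an ascending chain are strictly ordered by it; and $R$ is universal on every non-degenerate cluster. I will also use the observation, stated just before the lemma, that a strictly ascending chain of points has pairwise distinct terms.

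For $(2)\Rightarrow(1)$ I would establish the contrapositive $\neg(1)\Rightarrow\neg(2)$. Given a strictly ascending chain of points $x_0Rx_1R\cdots$, pass to the chain of clusters $C_{x_0}RC_{x_1}R\cdots$, which is ascending since $x_mRx_{m+1}$ yields $C_{x_m}RC_{x_{m+1}}$. It then suffices to see these clusters are pairwise distinct, for then the chain cannot be ultimately constant. If $C_{x_m}=C_{x_k}$ for some $m<k$, then $x_m$ and $x_k$ are two distinct points of a single cluster, which is therefore non-degenerate; universality of $R$ on it forces $x_kRx_m$, contradicting strictness. Hence $\neg(2)$ holds.

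For $(1)\Rightarrow(2)$, again by contraposition, suppose some ascending chain $C_0RC_1R\cdots$ of clusters fails to be ultimately constant; I would manufacture from it a strictly ascending chain of points. The first step is to note that infinitely many distinct clusters must occur: the distinct clusters appearing are linearly ordered by the antisymmetric lifted relation (any two are comparable, via the term of larger index), so if there were only finitely many there would be a greatest one, say $C^*$, first reached at some index $m$; since the chain never descends and $C^*$ is maximal, antisymmetry pins $C_k=C^*$ for all $k\ge m$, making the chain ultimately constant, contrary to assumption. So I may choose indices $m_0<m_1<\cdots$ with the $C_{m_i}$ pairwise distinct; transitivity gives $C_{m_i}RC_{m_j}$ for $i<j$, and antisymmetry makes each such pair strictly ordered. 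Picking any representative $x_i\in C_{m_i}$ and invoking well-definedness of the lifting, $x_iRx_{i+1}$ holds while $x_jRx_i$ fails for all $i<j$ (otherwise antisymmetry would collapse $C_{m_i}$ and $C_{m_j}$). Thus $x_0Rx_1R\cdots$ is strictly ascending, giving $\neg(1)$.

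The two cluster-to-point and point-to-cluster translations are routine. The part to get right is the forward direction, where one must (a) upgrade ``not ultimately constant'' to ``infinitely many distinct clusters'', and (b) extract a subsequence along which strictness holds for \emph{all} pairs of indices, not merely consecutive ones, while keeping an eye on the degenerate-cluster case in which $C_xRC_x$ can fail. Both points are dispatched by the antisymmetry of the lifted order, so I anticipate no genuine difficulty.
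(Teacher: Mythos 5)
Your proof is correct and follows essentially the same route as the paper's: both directions by contraposition, translating between point chains and cluster chains via the lifted relation and using its antisymmetry. Your only deviations are cosmetic elaborations (deriving $\neg(2)$ from pairwise distinctness of the clusters rather than strict ascent of the cluster chain, and explicitly establishing that infinitely many distinct clusters occur before extracting the subsequence), and both are sound.
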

\begin{proof}
Suppose (1) fails, and there is a strictly ascending $R$-chain $\{x_m:m<\om\}$. Then $C_{x_m}R\,C_{x_{m+1}}$ and not 
$C_{x_{m+1}}R\, C_{x_m}$ for all $m$, so the cluster chain
$\{C_{x_m}:m<\om\}$ is strictly ascending and hence not ultimately constant, showing that (2) fails.

Conversely if (2) fails, there is an ascending cluster chain $\{C_m:m<\om\}$ that is not ultimately constant. So for all $m$ there exists a $k>m$ such that $C_m\ne C_k$, and hence not $C_kR C_m$ as $C_mR C_k$ and R is antisymmetric on clusters. Using this we can pick out a subsequence $\{C_{fm}:m<\om\}$ that is strictly ascending. Then choosing $x_m\in C_{fm}$ for all $m$ gives a  chain of points  $\{x_m:m<\om\}$ that is strictly ascending, showing that (1) fails.
\end{proof}

A \emph{cycle of length $n\ge 1$}, or \emph{$n$-cycle}, is a sequence  $x_0,\dots,x_{n-1}$ of $n$  \emph{distinct} points such that $x_0,\dots,x_{n-1},x_0$ is an $R$-path. There are no 0-cycles.
A 1-cycle is given by a single point $x_0$ having $x_0Rx_0$.

Adopting terminology from graph theory, we define the \emph{circumference} of frame $\F$ to be the supremum of the set of all lengths of cycles in $\F$.  In particular $\F$ has circumference $0$ iff it has no cycles, a property implying that $\F$ has no reflexive points. In a finite frame with non-zero circumference, since there are finitely many cycles the  circumference is the length of a \emph{longest} one.

In a transitive frame, the points of any cycle  are $R$-related to each other and are reflexive, and all belong to the same non-degenerate cluster. It follows that the circumference is $0$ iff the frame is irreflexive.
Moreover,  any finite non-empty subset of a non-degenerate cluster can be arranged (arbitrarily) into a cycle. Thus for $n\ge 1$, a frame has a cycle of length $n$ iff it has a non-degenerate cluster of size at least $n$. So  a non-zero circumference of a finite transitive frame is equal to the size of a largest non-degenerate cluster.

\section{Models and Valid Schemes}  \label{secmodels}

In the standard language of propositional modal logic, formulas $\ph,\psi,\dots$ are constructed from some denumerable set $\Var$ of propositional variables by the  Boolean connectives $\top$, $\neg$, $\land$ and the unary modality $\bo$. The other Boolean connectives $\bot$, $\lor$, $\to$, $\leftrightarrow$ are introduced as the usual abbreviations, and the dual modality $\di$ is defined to be $\neg\bo\neg$.
We write $\bo^*\ph$ as an abbreviation of the formula $\ph\land\bo\ph$, and $\di^*\ph$ for $\ph\lor\di\ph$.

We now describe the \emph{Kripke semantics}, or \emph{relational semantics}, for this language. A \emph{ model} $\M=(W,R,V)$  on a frame $(W,R)$ is given by a valuation function $V$ assigning to each variable $p\in\Var$ a subset $V(p)$ of $W$, thought of as the set of points of $W$ at which $p$ is true.  The \emph{truth relation }
$\M,x\models\varphi$ of a formula $\varphi$  being \emph{true at $x$ in $\M$} is defined by an induction on the formation of $\ph$ as follows:
\begin{itemize}
\item $\M,x\models p$ iff $x\in V(p)$, for $p\in\Var$.

\item $\M,x\models\top$.

\item $\M,x\models\neg\varphi$ iff $\M,x\not\models\varphi$ (i.e.\ not $\M,x\models\varphi$).

\item $\M,x\models\varphi\wedge\psi$ iff $\M,x\models\varphi$ and $\M,x\models\psi$.

\item $\M,x\models\bo\varphi$ iff $\M,y\models\varphi$ for every $y\in W$ such that $xRy$.
\end{itemize}
Consequently,   where $R^*=R\cup\{(x,x):x\in W\}$, the reflexive closure of $R$, we have
\begin{itemize}
\item   \label{item:semdi}
$\M,x\models\di\ph$ iff $\M,y\models\varphi$ for some  $y\in W$ such that $xRy$.

\item  $\M,x\models\bo^*\ph$ iff $\M,y\models\varphi$ for every  $y\in W$ such that $xR^*y$.

\item  $\M,x\models\di^*\ph$ iff $\M,y\models\varphi$ for some  $y\in W$ such that $xR^*y$.
 \end{itemize} 

A model $\M$ assigns to each formula $\ph$ the \emph{truth set}  $\M\ph=\{x\in W:\M,x\models\ph\}$. (The semantics could have given by  defining truth sets inductively, starting with $\M p=V(p)$.)
We say that $\ph$ is \emph{true in model $\M$}, written  $\M\models\ph$, if it is true at all points in $\M$, i,e.\
$\M\ph=W$. We call  $\ph$ \emph{valid in frame $\F$}, written $\F\models\ph$,  if it is true in all models on $\F$. 
We may also write $\M\models\Phi$, or $\F\models\Phi$, to indicate that every member of a set $\Phi$ of formulas is true in $\M$, or valid in $\F$.

\medskip
Given formulas $\Vec{\ph}{n}$,
define the formula $\P_n(\Vec{\ph}{n})$ to be
$$
\di(\ph_1\land\di(\ph_2\land\cdots \land\di(\ph_n\land\di \ph_0))\cdots)
$$
provided that $n\ge 1$. For the case $n=0$, put $\P_0(\ph_0)=\di\ph_0$.
This definition can made more formal  by  inductively defining  a sequence $\{\P_n:n< \om\}$ of operations on formulas, with $\P_n$ being $n+1$-ary.  $\P_0$ is as just given, and  for $n>0$ we inductively  put 
$$
\P_n(\ph_0,\ph_1,\dots,\ph_n)=\di(\ph_1\land \P_{n-1}(\ph_0,\ph_2,\dots,\ph_n)).
$$
Then the next result follows readily from the properties of the truth relation.

\begin{lemma} \label{Rpath}
In any model $\M$ on any frame, $\M,x_0\models\P_n(\Vec{\ph}{n})$ iff there is an $R$-path $x_0R\cdots Rx_{n+1}$ such that $\M,x_i\models\ph_i$ for $1\le i\le n$ and $\M,x_{n+1}\models\ph_0$.

\end{lemma}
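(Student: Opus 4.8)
The plan is to prove both directions simultaneously by induction on $n$, reading the equivalence off the recursive definition of $\P_n$ together with the semantic clause for $\di$.

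First I would dispose of the base case $n=0$. Since $\P_0(\ph_0)=\di\ph_0$ by definition, the clause for $\di$ gives $\M,x_0\models\P_0(\ph_0)$ iff there is a point $x_1$ with $x_0Rx_1$ and $\M,x_1\models\ph_0$. This is exactly the existence of an $R$-path $x_0Rx_1$ with $\M,x_1\models\ph_0$, the constraint on intermediate points (those indexed $1\le i\le 0$) being vacuous.

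For the inductive step I would assume the equivalence for $n-1$ and expand $\P_n(\Vec{\ph}{n})=\di(\ph_1\land\P_{n-1}(\ph_0,\ph_2,\dots,\ph_n))$. By the clause for $\di$, $\M,x_0\models\P_n(\Vec{\ph}{n})$ holds iff there is an $x_1$ with $x_0Rx_1$, $\M,x_1\models\ph_1$, and $\M,x_1\models\P_{n-1}(\ph_0,\ph_2,\dots,\ph_n)$. Applying the induction hypothesis to this $(n-1)$-ary instance, evaluated at $x_1$, the last conjunct is equivalent to the existence of an $R$-path $x_1Rx_2R\cdots Rx_{n+1}$ with $\M,x_i\models\ph_i$ for $2\le i\le n$ and $\M,x_{n+1}\models\ph_0$. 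Prefixing the edge $x_0Rx_1$ and adjoining $\M,x_1\models\ph_1$ then yields precisely an $R$-path $x_0R\cdots Rx_{n+1}$ of the desired form, and conversely every such path decomposes at $x_1$ into its first edge and a witnessing tail for the hypothesis.

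The one place that requires care — and the only plausible source of error — is the index bookkeeping in the inductive step. In the recursion the distinguished formula $\ph_0$, which must hold at the terminal point, is held fixed while the remaining arguments shift down by one, so I would verify explicitly that after relabelling the points of the shorter path the subscripts align as claimed (that the formula required to hold at $x_i$ is indeed $\ph_i$). Apart from this routine matching of indices, the proof is a direct unwinding of the definitions, with no substantive obstacle.
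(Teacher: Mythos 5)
Your proof is correct, and it is exactly the routine induction on $n$ via the recursive definition of $\P_n$ that the paper has in mind when it says the lemma ``follows readily from the properties of the truth relation'' (the paper omits the proof entirely). Your index bookkeeping in the inductive step checks out: the shifted arguments $(\ph_0,\ph_2,\dots,\ph_n)$ evaluated along the tail $x_1R\cdots Rx_{n+1}$ line up precisely with the claimed subscripts.
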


Let $\D_n(\Vec{\ph}{n})$ be $\bigwedge_{ i<j\le n}\neg(\ph_i\land \ph_j)$. For $n=0$ this is the empty conjunction, which we take to be the constant tautology $\top$.
Define $\C_n$ to be the scheme
$$
\bo\nolimits^*\D_n(\Vec{\ph}{n}) \to(\di \ph_0 \to
\di(\ph_0\land\neg\P_n(\Vec{\ph}{n}) ).
$$
In other words, $\C_n$ is the set of all uniform substitution instances of 
$$
\bo\nolimits^*\D_n(\Vec{p}{n}) \to(\di p_0 \to
\di(p_0\land\neg\P_n(\Vec{p}{n}) ),
$$
where $\Vec{p}{n}$ are variables.

\begin{theorem} \label{sound}
Let $\F$ be any transitive frame, and $n\ge 0$.

\begin{enumerate}[\rm 1.]
\item   \label{1(1)}
$\F\models\C_n$ iff $\F$  has circumference at most $n$ and has  no strictly ascending chains.
\item
If $\F$ is \textbf{finite}, then $\F\models\C_n$  iff $\F$  has circumference at most $n$.
\end{enumerate}
\end{theorem}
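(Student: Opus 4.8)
The plan is to prove both parts from the Kripke semantics together with Lemmas \ref{Rpath} and \ref{const}, obtaining part~2 as an immediate consequence of part~1. For the soundness half of part~1 (the implication from ``circumference at most $n$ and no strictly ascending chains'' to $\F\models\C_n$) I would fix a model $\M$ on $\F$ and a point $x$ at which the antecedents hold, so that $\M,x\models\bo\nolimits^*\D_n(\Vec{\ph}{n})$ and $\M,x\models\di\ph_0$, and aim for $\M,x\models\di(\ph_0\land\neg\P_n(\Vec{\ph}{n}))$. Let $S=\{y: xRy \text{ and } \M,y\models\ph_0\}$, which is nonempty. Arguing by contradiction, I would suppose every $y\in S$ satisfies $\P_n$. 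By Lemma \ref{Rpath} each such $y$ spawns an $R$-path witnessing $\P_n$ whose final point again lies in $S$, being an $R$-successor of $x$ (by transitivity) that satisfies $\ph_0$. Iterating yields an infinite ascending chain $y^{(0)}Ry^{(1)}R\cdots$ inside $S$.

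By Lemma \ref{const}, the absence of strictly ascending chains forces the induced cluster chain to be ultimately constant, so for large $m$ all the $y^{(m)}$ lie in one (necessarily non-degenerate) cluster $C$. Choosing such an $m$ and examining the $\P_n$-witnessing path $y^{(m)}Rz_1R\cdots Rz_nRy^{(m+1)}$ with $\M,z_i\models\ph_i$ and $\M,y^{(m+1)}\models\ph_0$, I would check that every $z_i$ also lies in $C$: each is squeezed between $y^{(m)}$ and $y^{(m+1)}$, which are mutually $R$-related as members of the same cluster, so $z_iRy^{(m+1)}Ry^{(m)}Rz_i$ places $z_i$ in $C_{y^{(m)}}=C$. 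Now $y^{(m)},z_1,\dots,z_n$ are all $R^*$-successors of $x$, so $\D_n$ holds at each; since $\ph_0,\ph_1,\dots,\ph_n$ are thereby pairwise incompatible at these points, the $n+1$ points are pairwise distinct. Hence $C$ has at least $n+1$ elements, giving a cycle of length $n+1$ and contradicting circumference at most $n$. So some $y\in S$ falsifies $\P_n$, which delivers $\M,x\models\di(\ph_0\land\neg\P_n(\Vec{\ph}{n}))$.

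For the converse half of part~1 I would contrapose and split into two cases. If $\F$ has circumference at least $n+1$, pick a cycle $w_0,\dots,w_n$ of $n+1$ distinct points in a single non-degenerate cluster and set $V(p_i)=\{w_i\}$. The $p_i$ being distinct singletons makes $\D_n$ true everywhere, hence $\bo\nolimits^*\D_n$ true at $w_0$; $\di p_0$ holds at $w_0$ because $w_0Rw_0$; and the path $w_0Rw_1R\cdots Rw_nRw_0$ shows, via Lemma \ref{Rpath}, that $\P_n$ holds at the unique $p_0$-point $w_0$, so $\di(p_0\land\neg\P_n)$ fails there and $\C_n$ is falsified. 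If instead $\F$ has a strictly ascending chain $x_0Rx_1R\cdots$, whose terms are pairwise distinct, I would colour it periodically by $V(p_j)=\{x_m: m\equiv j \pmod{n+1}\}$ for $0\le j\le n$. These sets are disjoint, so $\bo\nolimits^*\D_n$ holds; $\di p_0$ holds at $x_0$; and from any $p_0$-point $x_m$ (with $m\equiv 0$) the path $x_mRx_{m+1}R\cdots Rx_{m+n+1}$ meets $p_1,\dots,p_n,p_0$ in order, so $\P_n$ holds at $x_m$ by Lemma \ref{Rpath}. As every $p_0$-point satisfies $\P_n$, the formula $\di(p_0\land\neg\P_n)$ fails at $x_0$, again falsifying $\C_n$.

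Part~2 is then immediate: a strictly ascending chain has infinitely many pairwise distinct terms, so a finite frame has none, and the extra clause in part~1 is vacuous. The main obstacle is the soundness argument, and within it the step that converts ``ultimately constant cluster'' into an overlong cycle: one must confirm that the intermediate points $z_i$ of a single $\P_n$-witness all fall into the stabilised cluster $C$, and that the incompatibility supplied by $\bo\nolimits^*\D_n$ makes the $n+1$ relevant points genuinely distinct. I would also verify that the boundary case $n=0$ and the degenerate situations where consecutive chain points coincide are subsumed by the same reasoning.
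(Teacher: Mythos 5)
Your proposal is correct and follows essentially the same route as the paper: the same two-case contrapositive (overlong cycle, then periodically coloured strictly ascending chain) for one direction, and for the other the same iteration of $\P_n$-witnessing paths combined with Lemma \ref{const} to trap $n+1$ points, made distinct by $\bo^*\D_n$, inside one non-degenerate cluster. The only cosmetic difference is that the paper concatenates all the witnessing paths into a single ascending chain and extracts the $n+1$ distinct points from its stabilised tail, whereas you track only the endpoints $y^{(m)}$ and pull the $n+1$ points out of one witnessing path once the cluster chain has stabilised; both versions handle $n=0$ and the degeneracies correctly.
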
 
\begin{proof}
Fix a list $\Vec{p}{n}$ of variables and abbreviate $\D_n(\Vec{p}{n})$ to $\D_n$ and $\P_n(\Vec{p}{n})$ to $\P_n$. Then the scheme $\C_n$ is valid in $\F$ iff its instance
\begin{equation}  \label{DP}
\bo\nolimits^*\D_n\to(\di p_0 \to
\di(p_0\land\neg\P_n ))
\end{equation}
is valid, since validity in a frame preserves uniform substitution of formulas for variables.

(1).  Assume $\F\models\C_n$. Then we show that $\F$ has circumference at most $n$ and   no strictly ascending chains. For, if that were to fail there would be two possible cases, the first being that $\F$ has  circumference greater than  $n$, so has a cycle  with at least $n+1$  elements, say $x_0,\dots,x_n$. If $n=0$ then $\D_n=\top$. If $n\ge 1$, take a model on $\F$  having  $V(p_i)=\{x_i\}$ for all $i\le n$. 
 The $x_i$'s are distinct and each formula $\neg(p_i\land p_j)$ with $i<j\le n$ is true at every point in the model, hence so is $\D_n$. So whatever the value of $n$,  $\D_n$ is true everywhere, and therefore so is $\bo^*\D_n$. By transitivity all points of the cycle are  $R$-related to each other, hence to $x_0$, including $x_0$ itself. Therefore $\di p_0$ is true at $x_0$.
By Lemma \ref{Rpath}, the $R$-path $x_0Rx_1R\cdots x_nRx_{0}$ ensures that $\P_n$ is true at $x_0$. Hence as $p_0$ is true only at $x_0$, $p_0\land\neg\P_n$ is false everywhere, therefore so is $\di(p_0\land\neg\P_n)$. Altogether these facts imply that the instance \eqref{DP} of   $\C_n$ is false at $x_0$ (in fact at every point of the cycle), contradicting the assumption that $\F\models\C_n$.

The second case is that $\F$ has a strictly ascending $R$-chain $\{x_m:m<\om\}$.   Then take a model on $\F$  having  
$V(p_i)=\{x_m: m\equiv i\bmod{(n+1)}\}$ for all $i\le n$. Since the points $x_m$ of the chain are all distinct, the sets 
$V(p_i)$ are all pairwise disjoint,  so each formula $\neg(p_i\land p_j)$ is true everywhere, hence so is $\bo^*\D_n$.
Since each congruence class mod $n+1$ is cofinal in $\om$, each set $V(p_i)$ is cofinal in the chain, i.e.\ for all $k<\om$ there is an $m>k$ with $x_kRx_m\in V(p_i)$. In particular this implies that $\di p_0$ is true at every point of the chain. 
Now if $p_0$ is true at point $x_m$, then the $R$-path
$x_mRx_{m+1}R\cdots  x_{m+n}Rx_{m+(n+1)}$  has $p_i$ true at $x_{m+i}$ for  $1\le i\le n$ and $p_0$ true at 
$x_{m+(n+1)}$, so $\P_n$ is true at $x_m$ by Lemma \ref{Rpath}. Since $p_0$ is true only at points of the chain,
it follows that $\di(p_0\land\neg\P_n )$ is false everywhere. Altogether then, \eqref{DP} is false at all points of the chain, again contradicting $\F\models\C_n$. The contradictions in both cases ensure that if $\F\models\C_n$ then $\F$ has circumference at most $n$ and  no strictly ascending chains.

Finally, assume that  $\F$  has circumference at most $n$ and  no strictly ascending chains. Suppose then, for the sake of contradiction, that $\C_n$ is not valid in $\F$. Hence  \eqref{DP} is not valid and so is false at some $x$ in some model on $\F$. Working in that model, $\di p_0$ and $\bo^*\D_n$ are true at $x$, so $p_0$ is true at some $x_0$ with $xRx_0$, and the formulas $\neg(p_i\land p_j)$ are all true throughout $\{y\in W: xR^*y\}$; while 
$\di(p_0\land\neg\P_n )$ is false at $x$. As $xRx_0$,  $p_0\land\neg\P_n$ is false at $x_0$. Since $p_0$ is true at  $x_0$,  this implies that $\P_n$ is true at $x_0$. Hence by Lemma \ref{Rpath}, there is an $R$-path $x_0Rx_1R\cdots Rx_{n+1}$ such that $p_i$ is true at $x_i$ for $1\le i\le n$ and $p_0$ is true at $x_{n+1}$. The argument then repeats: 
 by transitivity $xRx_{n+1}$, so since $\di(p_0\land\neg\P_n )$ is false at $x$,  $p_0\land\neg\P_n$ is false at $x_{n+1}$, and hence $\P_n$ is true at $x_{n+1}$. So by Lemma \ref{Rpath} again, there is an $R$-path $x_{n+1}Rx_{n+2}R\cdots Rx_{2(n+1)}$ such that $p_i$ is true at $x_{n+1+i}$ for $1\le i\le n$ and $p_0$ is true at $x_{2(n+1)}$.

Iterating this construction ad infinitum, we generate an ascending $R$-chain
$\{x_m: m<\om\}$ of points of $W$ such that for each $i\le n$, $p_i$ is true at $x_m$ iff $m\equiv i\bmod{(n+1)}$. Hence $p_i$ is true cofinally along the chain.
By assumption there are no \emph{strictly} ascending chains, so by Lemma \ref{const} the ascending cluster chain
$\{\ab{x_m}: m<\om\}$ is ultimately constant. It follows that the point chain cannot continue moving forward into a `later' cluster forever, so some cluster $C$ in the cluster chain must contain some tail 
$\{x_m: m\ge k\}$ of the point chain.  Then $x_k,x_{k+1}\in C$ and $x_kRx_{k+1}$, so $C$ is a non-degenerate cluster.
The tail $\{x_m: m\ge k\}$ contains points at which each of $p_0,\dots,p_n$ are true, by the cofinality of the truth of these $p_i$'s. But by the truth of $\bo^*\D_n$ at $x$, no two of these variables are ever true at the same point of the point chain. Hence the cluster $C$ contains at least $n+1$ distinct points, which form an $n+1$-cycle. This contradicts the assumption that $\F$ has circumference at most $n$. The contradiction forces us to conclude that $\F\models\C_n$.

(2).
This follows immediately from (1), as a finite transitive frame cannot have any strictly ascending chains.
\end{proof}
The cases $n=0,1$ of this theorem for $\C_n$ are essentially known. $\C_0$ is
$$
\bo\nolimits^*\top\to(\di \ph_0 \to
\di(\ph_0\land\neg\di\ph_0 )).
$$
That is valid in the same frames as $(\di \ph_0 \to\di(\ph_0\land\neg\di\ph_0 ))$,  an equivalent form of the L\"ob axiom
$$
\bo(\bo\ph_0\to\ph_0)\to\bo\ph_0.
$$
But it is well known that the L\"ob axiom is valid in a frame $\F$ iff $\F$ is transitive and has no ascending chains (see 
\citealp[p.~75]{bool:logi93}   or    \citealp[Example 3.9]{blac:moda01}).
Now a transitive frame has circumference 0 iff it is irreflexive, and in a transitive irreflexive frame every ascending chain is strictly ascending. From these facts
it can be seen that a transitive frame has no ascending chains iff it has circumference 0 and no strictly ascending chains.

For $n=1$, $\C_1$ is valid in the same transitive frames as the Grz-variant Grz$_\Box$ of \eqref{grzL} (see Theorem \ref{C12} below). But a transitive frame validates Grz$_\Box$ iff it has no ascending chains  $x_0Rx_1R\cdots $ with $x_n\ne x_{n+1}$ for all $n$
\cite[Lemma 1.1]{amer:cutf96}. The latter condition prevents there being any clusters with more than one element, ensuring that the circumference is at most 1. Thus it can be seen that a transitive frame validates Grz$_\Box$ iff  it has circumference at most 1 and no strictly ascending chains.

\section{Logics and Canonical Models}  \label{seclogics}

A \emph{normal logic} is any set $L$ of formulas that
 includes all tautologies and all instances of the scheme
\begin{description}
\item[K:]
$\bo(\ph\to\psi)\to(\bo\ph\to\bo\psi)$,
\end{description}
and whose rules include  modus ponens and $\Box$-generalisation (from $\ph$ infer $\bo\ph$).  
The set of all formulas valid in some given class of frames is a normal logic. The smallest normal logic, known as K, consists of the formulas that are valid in all frames. We mostly use the standard naming convention for logics that if $\Phi_1,\dots,\Phi_n$ is a sequence of sets of formulas then K$\Phi_1\cdots\Phi_n$ denotes the smallest normal logic that includes $\Phi_1\cup\cdots\cup\Phi_n$.

The members of a logic $L$ may be referred to as the \emph{$L$-theorems}. A formula $\ph$ is \emph{$L$-consistent} if $\neg\ph$ is not an $L$-theorem, and a set of formulas is $L$-consistent iff the conjunction of any of its finite subsets is $L$-consistent. A formula is an $L$-theorem iff it belongs to every \emph{maximally} $L$-consistent set of formulas.

A normal logic $L$ has the \emph{canonical frame}
 $\F_L=(W_L,R_L)$, where $W_L$ is the set of maximally $L$-consistent sets of formulas, and $xR_Ly$  iff $\{\ph:\bo\ph\in x\}\sub y$ iff  $\{\di\ph:\ph\in y\}\sub x$. 

By standard canonical frame theory (e.g.\ \citealt[Chapter 4]{blac:moda01} or \citealt[Chapter 3]{gold:logi92}),  we have that for all formulas $\ph$ and all $x\in W_L$:
\begin{eqnarray}
\label{Boxcan}
\bo\ph\in x &&\text{iff\quad  for all } y\in W_L, \ xR_Ly \text{ implies }\ph\in y.
\end{eqnarray}
The \emph{canonical model} $\M_L$ on $\F_L$ has $V(p)=\{x\in W_L:p\in x\}$ for all $p\in\Var$. With the help of \eqref{Boxcan} it can be shown that it satisfies
\begin{equation}\label{truthlemma}
\M_L,x\models \ph \enspace \text{iff} \enspace \ph\in x,
\end{equation}
a result known as the  \emph{Truth Lemma} for $\M_L$. It implies that the formulas that are true in $\M_L$ are precisely the $L$-theorems, since these are precisely the formulas that belong to every member of $W_L$.
Thus $\M_L\models\ph$ iff $\ph\in L$.

A logic $L$ is \emph{transitive}  if it includes all instances of the scheme
\begin{description}
\item[4:]
$\bo\ph\to\bo\bo\ph$.
\end{description}
The set of formulas valid in some class of transitive frames is a transitive normal logic.
The smallest transitive normal logic is known as K4. Its theorems are precisely the formulas that are valid in all transitive frames.
If a logic $L$ extends K4, equivalently if $\M_L\models 4$, then the relation $R_L$ of its canonical frame is transitive.

Scheme 4 has the equivalent dual form  $\di\di\ph\to\di\ph$. This has the weaker variant
\begin{description}
\item[w4:]
$\di\di\ph\to\di^*\ph$,
\end{description}
i.e.\ $\di\di\ph\to\ph\lor\di\ph$. It plays a significant role in the topological semantics of Section \ref{sectop}. The theorems of Kw4, the smallest normal logic to include w4, are precisely the formulas that are valid in all  frames that are \emph{weakly transitive} in the sense that $xRyRz$ implies $xR^*z$.
If $\M_L\models \text{w4}$, then $R_L$ is weakly transitive.

Since $\bo^*\top$ is a theorem of any normal logic, the scheme $\C_0$ is deductively equivalent over K to the dual form \eqref{dualob} of L\"ob's axiom. But scheme 4 is derivable from L\"ob's axiom over K (see \citealt[p.~11]{bool:logi93}),
so K4$\C_0=$K$\C_0=$GL.

For $n=1$ we have:

 \begin{theorem}  \label{C12}
 The scheme
$\C_1$ is deductively equivalent over Kw4 to the scheme
\begin{equation} \label{digrz0}
\di \ph_0\to \di(\ph_0\land\neg \di(\neg \ph_0\land \di \ph_0))
\end{equation}
that is itself deductively equivalent over K to the Grz-variant Grz$_\Box$ of \eqref{grzL}.
\end{theorem}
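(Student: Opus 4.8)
The statement bundles two separate equivalences, and I would treat them in turn. The second one, that \eqref{digrz0} and Grz$_\Box$ are deductively equivalent over K, I regard as routine dualisation. First I would rewrite the inner subformula using the definition of $\di$ and pure propositional reasoning: $\neg\di(\neg\ph_0\land\di\ph_0)$ is K-provably equal to $\bo(\neg\ph_0\to\bo\neg\ph_0)$, so \eqref{digrz0} is K-equivalent to $\di\ph_0\to\di(\ph_0\land\bo(\neg\ph_0\to\bo\neg\ph_0))$. Dualising Grz$_\Box$ of \eqref{grzL} by contraposition (replacing each $\bo$ by $\neg\di\neg$) gives $\di\neg p\to\di(\bo(p\to\bo p)\land\neg p)$, and the uniform substitution $p\mapsto\neg p$ turns this into exactly the formula just displayed. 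Since each scheme is obtained from the other by uniform substitution together with replacement of K-provable equivalents, the extensions of K by \eqref{digrz0} and by Grz$_\Box$ coincide. I expect no difficulty here.

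For the first equivalence, over Kw4, I would prove the two containments separately. The direction that $\C_1$ yields \eqref{digrz0} needs nothing beyond K: substituting $\ph_1:=\neg\ph_0$ into $\C_1$ makes $\D_1(\ph_0,\neg\ph_0)=\neg(\ph_0\land\neg\ph_0)$ a tautology, so its $\bo\nolimits^*$-prefix becomes a theorem of K and can be detached by modus ponens; since $\P_1(\ph_0,\neg\ph_0)=\di(\neg\ph_0\land\di\ph_0)$, what remains is precisely \eqref{digrz0}.

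The substantive direction is deriving $\C_1$ from \eqref{digrz0} over Kw4, and this is where weak transitivity is essential. The guiding semantic picture is: from $\di\ph_0$, \eqref{digrz0} produces an $R$-successor $z$ of the current point $x$ satisfying $\ph_0\land\neg\di(\neg\ph_0\land\di\ph_0)$; if $z$ failed the stronger $\neg\di(\ph_1\land\di\ph_0)$, there would be $w$ with $zRw$ and $\ph_1\land\di\ph_0$ at $w$, and since $xRzRw$ gives $xR^*w$ by weak transitivity, the hypothesis $\bo\nolimits^*\neg(\ph_0\land\ph_1)$ forces $\neg\ph_0$ at $w$, whence $\di(\neg\ph_0\land\di\ph_0)$ at $z$, a contradiction. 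To convert this into a Kw4-derivation I would assemble a short chain of theorems, writing $\alpha$ for $\ph_0\land\neg\di(\neg\ph_0\land\di\ph_0)$ and $\beta$ for $\ph_0\land\neg\di(\ph_1\land\di\ph_0)$. The implication $\bo(\ph_1\to\neg\ph_0)\to\big(\neg\di(\neg\ph_0\land\di\ph_0)\to\neg\di(\ph_1\land\di\ph_0)\big)$ is valid in every frame, hence a theorem of K; after necessitating it, distributing $\bo$, and absorbing the common $\ph_0$ conjuncts one obtains $\bo\bo(\ph_1\to\neg\ph_0)\to\bo(\alpha\to\beta)$. Combining this with the K-theorem $\bo(\alpha\to\beta)\to(\di\alpha\to\di\beta)$ and the instance $\di\ph_0\to\di\alpha$ of \eqref{digrz0}, the whole derivation reduces to supplying $\bo\nolimits^*\neg(\ph_0\land\ph_1)\to\bo\bo(\ph_1\to\neg\ph_0)$. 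This last step is exactly where w4 enters: the dual form of the w4 axiom is the scheme $\bo\nolimits^*\chi\to\bo\bo\chi$, so taking $\chi:=\neg(\ph_0\land\ph_1)$ and rewriting $\neg(\ph_0\land\ph_1)$ as $\ph_1\to\neg\ph_0$ delivers precisely what is needed.

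The main obstacle I anticipate is bookkeeping rather than conceptual. The key observation to get right is that only two $R$-steps ever occur, from $x$ to $z$ to $w$, so that the weak-transitivity principle $\bo\nolimits^*\chi\to\bo\bo\chi$ suffices and full transitivity (scheme 4) is not required; and the nested-box manipulations must be organised so that the incompatibility hypothesis $\bo\nolimits^*\neg(\ph_0\land\ph_1)$ is propagated to the \emph{second} successor at exactly the right place. Once the chain of K-theorems above is laid out in order, the derivation is a straightforward concatenation.
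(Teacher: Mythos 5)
Your proof is correct, and its skeleton matches the paper's: the easy direction is obtained by substituting $\neg\ph_0$ for $\ph_1$ in $\C_1$ and detaching the tautologous antecedent $\bo\nolimits^*\neg(\ph_0\land\neg\ph_0)$, and the hard direction reduces to establishing the single Kw4-theorem
$\bo\nolimits^*\neg(\ph_0\land \ph_1)\to \big[\eqref{digrz0} \to (\di \ph_0\to \di(\ph_0\land\neg \di(\ph_1\land \di \ph_0)))\big]$.
Where you genuinely differ is in how that key formula is certified. The paper simply observes that it is valid in all weakly transitive frames and invokes the completeness of Kw4 for such frames; you instead give an explicit Hilbert-style derivation from the dual form $\bo\nolimits^*\chi\to\bo\bo\chi$ of w4, via the chain $\bo\bo(\ph_1\to\neg\ph_0)\to\bo(\alpha\to\beta)$ and $\bo(\alpha\to\beta)\to(\di\alpha\to\di\beta)$ with your abbreviations $\alpha$, $\beta$. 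Your version buys self-containedness --- it does not lean on the (unproved here) frame-completeness of Kw4 --- at the cost of some bookkeeping; the paper's buys brevity. You also supply the contraposition-plus-substitution argument for the K-equivalence of \eqref{digrz0} with Grz$_\Box$, which the paper asserts without proof. All the individual steps you list check out: the K-theorem $\bo(\ph_1\to\neg\ph_0)\to(\neg\di(\neg\ph_0\land\di\ph_0)\to\neg\di(\ph_1\land\di\ph_0))$, the rewriting of $\neg\di(\neg\ph_0\land\di\ph_0)$ as $\bo(\neg\ph_0\to\bo\neg\ph_0)$, and the identification of $\bo\nolimits^*\chi\to\bo\bo\chi$ as the contrapositive of $\di\di\ph\to\di^*\ph$ are all sound, and your observation that only two relational steps occur, so that weak transitivity suffices, is exactly the point of working over Kw4 rather than K4.
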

\begin{proof}
For any $\ph_0$, the formula $\C_1(\ph_0,\neg \ph_0)$ is
$$
\bo\nolimits^*\neg(\ph_0\land \neg \ph_0)\to (\di \ph_0\to \di(\ph_0\land\neg \di(\neg \ph_0\land \di \ph_0))).
$$
But $\neg(\ph_0\land \neg \ph_0)$ is a tautology, so $\bo^*\neg(\ph_0\land \neg \ph_0)$ is derivable in K, hence can be detached from $\C_1(\ph_0,\neg \ph_0)$ to derive  \eqref{digrz0}.

In the converse direction, for any  $\ph_0$ and  $\ph_1$ the formula
$$
\bo\nolimits^*\neg(\ph_0\land \ph_1)\to \big [\eqref{digrz0} \to (\di \ph_0\to \di(\ph_0\land\neg \di(\ph_1\land \di \ph_0))) \,\big]
$$
can be shown to be valid in all weakly transitive frames, hence is a theorem of  Kw4. Using it and  tautological reasoning, from  \eqref{digrz0} we can derive
$$
\bo\nolimits^*\neg(\ph_0\land \ph_1)\to (\di \ph_0\to \di(\ph_0\land\neg \di(\ph_1\land \di \ph_0))),
$$
which is $\C_1(\ph_0,\ph_1)$.
\end{proof}

We now introduce an apparent weakening of $\C_n$,  defining $\C_n^*$ to be the  scheme
$$
\bo\nolimits^*\D_n(\Vec{\ph}{n}) \to(\di \ph_0 \to
\di^*(\ph_0\land\neg\P_n(\Vec{\ph}{n}) ).
$$
Since  any formula of the form $\di\ph\to\di^*\ph$ is a tautology, $\C_n^*$ is a tautological consequence of $\C_n$, so is included in any logic that includes $\C_n$, and is true in any model in which $\C_n$ is true. In the converse direction we have

\begin{theorem}  \label{w44}
\begin{enumerate}[\rm 1.]
\item 
Let $\M$ be a model with weakly transitive relation $R$. Then $\M\models \C_n^*$ implies $\M\models \C_n$.
\item
$\C_n$ is included in any normal logic that includes w4 and $\C_n^*$.
\end{enumerate}
\end{theorem}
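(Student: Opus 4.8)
The plan is to prove part 1 (the semantic implication) by a direct argument on the relation $R$, and then obtain part 2 for free by running part 1 inside the canonical model $\M_L$. The whole content is in part 1; part 2 is a one-line application of the canonical-model machinery already recalled in Section~\ref{seclogics}.

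For part 1 I would fix an instance with formulas $\ph_0,\dots,\ph_n$ and a point $x$ of $\M$ at which $\bo^*\D_n(\Vec{\ph}{n})$ and $\di\ph_0$ both hold, and argue by contradiction that $\di(\ph_0\land\neg\P_n)$ holds at $x$. Supposing it fails, the truth of $\C_n^*$ at $x$ forces $\di^*(\ph_0\land\neg\P_n)$ to hold; since the $\di$-disjunct is assumed false, the reflexive disjunct must hold, so $\ph_0$ is true at $x$ while $\P_n$ is false at $x$. Using $\di\ph_0$ I pick $y$ with $xRy$ and $\ph_0$ true at $y$; because $\di(\ph_0\land\neg\P_n)$ is false at $x$ and $xRy$, the conjunction $\ph_0\land\neg\P_n$ fails at $y$, and as $\ph_0$ holds at $y$ this yields that $\P_n$ is \emph{true} at $y$. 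For $n=0$ there is nothing more to do, since $\P_0=\di\ph_0$, so $\P_0$ being false at $x$ already contradicts $\di\ph_0$ being true at $x$.

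For $n\ge 1$ I would invoke Lemma~\ref{Rpath} on $\P_n$ at $y$ to obtain an $R$-path $yRy_1R\cdots Ry_{n+1}$ with $\ph_i$ true at $y_i$ for $1\le i\le n$ and $\ph_0$ true at $y_{n+1}$. A single use of weak transitivity on $xRyRy_1$ gives $xR^*y_1$, and I split on the two ways this can hold. If $xRy_1$, then $xRy_1R\cdots Ry_{n+1}$ is itself a witnessing path of exactly the length Lemma~\ref{Rpath} requires, so $\P_n$ holds at $x$, contradicting that $\P_n$ is false at $x$. If instead $x=y_1$, then $\ph_1$ is true at $x$; but $\ph_0$ is also true at $x$, and $\bo^*\D_n$ gives $\D_n$ at $x$, one of whose conjuncts is $\neg(\ph_0\land\ph_1)$ (legitimate since $0<1\le n$) --- again a contradiction. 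Either branch closes the argument, so $\di(\ph_0\land\neg\P_n)$ holds at $x$ and $\M\models\C_n$.

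Part 2 then follows from the standard behaviour of the canonical model. If a normal logic $L$ contains w4 and $\C_n^*$, then because $\M_L\models\ph$ iff $\ph\in L$ we have both $\M_L\models\text{w4}$ and $\M_L\models\C_n^*$; the former makes $R_L$ weakly transitive. Applying part 1 to the weakly transitive model $\M_L$ yields $\M_L\models\C_n$, so every instance of $\C_n$ is an $L$-theorem, i.e.\ $\C_n\sub L$. The only real obstacle is the case split forced by weak rather than full transitivity: in the degenerate branch $x=y_1$ the path from $y$ folds back onto its start, and it is precisely the incompatibility clause $\neg(\ph_0\land\ph_1)$ inside $\D_n$ --- guaranteed by $\bo^*\D_n$ --- that rules this out. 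I would take care to verify that a single application of weak transitivity suffices (only the first two edges $xRy$, $yRy_1$ are at issue, the rest of the path being reused verbatim) and that the index bookkeeping makes the shifted path $xRy_1R\cdots Ry_{n+1}$ exactly a $\P_n$-witness for $x$.
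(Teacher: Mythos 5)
Your proposal is correct and follows essentially the same route as the paper: the paper argues the contrapositive ($\M\not\models\C_n$ implies $\M\not\models\C_n^*$), but the core step is identical --- extend the $\P_n$-witnessing path back to $x$ via one application of weak transitivity, using the conjunct $\neg(\ph_0\land\ph_1)$ of $\D_n$ to exclude the degenerate case $x=y_1$ (the paper phrases this as ``$x\ne x_1$, hence $xRx_1$'' rather than as a case split on $xR^*y_1$). Part 2 via the canonical model is exactly the paper's argument.
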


\begin{proof}

(1) Suppose that $\M \not\models \C_n$.
Then there is an instance
\begin{equation}  \label{Cninst2}
\bo\nolimits^*\D_n(\Vec{\ph}{n}) \to(\di \ph_0 \to
\di(\ph_0\land\neg\P_n(\Vec{\ph}{n}) )
\end{equation}
of $\C_n$ that is not true at some point $x$  in $\M$.  Working in $\M$, we have $x\models \bo^*\D_n$ and $x\models \di\ph_0$, but  $x\not\models  \di(\ph_0\land\neg\P_n)$.

If $n=0$, then immediately $x\not\models (\ph_0\land\neg\P_n)$ as $\P_n=\di\ph_0$ and $x\models \di\ph_0$.

If $n\geq 1$ we proceed as follows.
Since $x\models \di\ph_0$, there is some $x_0$ with $xRx_0$ and $x_0\models \ph_0$. Since $x\not\models  \di(\ph_0\land\neg\P_n)$, this gives $x_0\not\models  \ph_0\land\neg\P_n$, hence $x_0\models\P_n$. Thus by Lemma \ref{Rpath}, there is an $R$-path $x_0R\cdots Rx_{n+1}$ such that $x_i\models\ph_i$ for $1\le i\le n$ and $x_{n+1}\models\ph_0$.
Now suppose $x\models \ph_0$. Then $x\not\models \ph_1$ as $x\models \bo^*\D_n$. But  $x_1\models \ph_1$, so we have $xRx_0Rx_1$ and $x\ne x_1$, hence $xRx_1$ by weak transitivity. Thus $x$ can replace $x_0$ to give the $R$-path $xRx_1R\cdots Rx_{n+1}$ demonstrating that $x\models\P_n$.
This proves that $x\models \ph_0$ implies $x\models\P_n$, and therefore that  $x\not\models (\ph_0\land\neg\P_n)$. 

So in any case we have $x\not\models (\ph_0\land\neg\P_n)$.
Since we already have $x\not\models  \di(\ph_0\land\neg\P_n)$, we conclude that
$x\not\models  \di^*(\ph_0\land\neg\P_n)$.
Together with  $x\models \bo^*\D_n$ and $x\models \di\ph_0$, this shows that   $\M \not\models \C_n^*$.

(2). Let $L$ be any normal logic that includes w4 and $\C_n^*$. We apply (1) with $\M$ as the canonical model $\M_L$. Since $w4\sub L$, from the Truth Lemma \ref{truthlemma} we get $\M_L\models \text{w4}$ and so $R_L$ is weakly transitive. Since  $\C_n^*\sub L$ we get $\M_L\models \C_n^*$. Hence by (1),  $\M_L\models \C_n$, so $\C_n\sub L$.
\end{proof}

This theorem implies that 
$$\text{
Kw4$\C_n^*=$Kw4$\C_n$ and  K4$\C_n^*=$K4$\C_n$.}
$$
But for $n=0$ or 1 these are the one logic.  Since   K4$\C_0=$K$\C_0=$GL (above), we have    Kw4$\C_0=$ K4$\C_0$. Also Theorem \ref{C12} gives Kw4$\C_1=$Kw4Grz$_\Box$,  and \citet[Lemma 4.5]{gabe:topo04} has shown that scheme 4 is derivable  from  w4 and Grz$_\Box$ over K, so
 Kw4$\C_1=$K4Grz$_\Box$ $=$ K4$\C_1$.
 
 For $n\geq 2$, Kw4$\C_n$ is strictly weaker than K4$\C_n$. To see this, let $\F$ be an irreflexive frame consisting of two points that are $R$-related to each other but not to themselves. $\F$ is weakly transitive but not transitive, and validates  
 $\C_n$ for $n\geq 2$, hence $\F\models \text{Kw4}\C_n$ but  $\F\not\models \text{K4}\C_n$.

It is straightforward to give a proof-theoretic derivation of $\C_{n+1}$ in K4$\C_n$, showing that $\mathrm{K4}\C_{n+1}\sub \mathrm{K4}\C_{n}$.
We will prove in Theorem \ref{compl} that $\mathrm{K4}\C_{n}$ is characterised by validity in all finite transitive frames of circumference at most $n$. This is already known for $n=0,1$, as mentioned earlier.  K4$\C_0$, the G\"odel-L\"ob logic,  was first shown by \citet{sege:essa71} to be characterised by the class of finite transitive irreflexive frames, which are the finite transitive frames of circumference 0. Also K4$\C_1$, as the logic $\mathrm{K4Grz}_\Box$, was  shown by \cite{amer:cutf96}  to be characterised by the class of finite transitive antisymmetric frames, i.e. those having only singleton clusters, hence circumference at most 1.
We will however include the cases $n=0,1$ in our completeness proof to follow.

\section{Finite Model Property for K4$\C_n$}  \label{fmpn}

Let  $\M=(W,R,V)$ be any model that has transitive $R$ and $\M\models\C_n$, i.e.\ every instance of $\C_n$ is true in $\M$. For example, the canonical model of any normal logic extending K4$\C_n$ has these properties. When working within $\M$ we may sometimes leave out its name and just write $x\models\ph$ when $\M,x\models\ph$. We now set up a filtration of 
$\M$.

Let $\Phi$ be a finite set  of formulas that is closed under subformulas. 
For each $x\in W$ let $x^\Phi$ be the set $\{\ph\in\Phi:x\models\ph\}$ of all members of $\Phi$ that are true at $x$ in $\M$.
An equivalence relation $\sim$ on $W$ is given by putting
$x\sim y$ iff $x^\Phi=y^\Phi$. 
We write  $\ab{x}$ for the equivalence class $\{y\in W:x\sim y\}$, and put  $W_\Phi=\{\ab{x}:x\in W\}.$
The set $W_\Phi$ is finite, because the map $\ab{x}\mapsto x^\Phi$ is a well-defined injection of $W_\Phi$ into the finite powerset of $\Phi$. Thus $W_\Phi$ has size at most $2^{\text{size}\,{\Phi}}$.

Let $\M_\Phi=(W_\Phi,R_\Phi,V_\Phi)$ be the standard transitive  filtration of $\M$ through $\Phi$. Thus  $\ab{x}R_\Phi\ab{y}$ iff $\{\bo \ph,\ph:\bo \ph\in x^\Phi\}\sub {y^\Phi}$, and
$V_\Phi(p)=\{\ab{x}: x\models p\}$ for $p\in\Phi$, while $V_\Phi(p)=\emptyset$  otherwise. The relation $R_\Phi$ is transitive and has the important property that 
\begin{equation} \label{presR}
xRy \quad\text{implies}\quad   \ab{x}R_\Phi\ab{y},
\end{equation}
for all $x,y\in W$.
The\emph{ Filtration Lemma} gives that for all $\ph\in\Phi$ and all $x\in W$,
 \begin{equation}\label{filtlem}
\text{$\M_\Phi,\ab{x}\models \ph$ \enspace iff \enspace  $\M,x\models \ph$.}
 \end{equation}
We will use the fact that for any $R_\Phi$-cluster $C$, and any formula $\di \ph\in\Phi$,
\begin{equation} \label{same}
\text{if $\ab{x},\ab{y}\in C$, then $\M,x\models\di \ph$\enspace iff\enspace$\M,y\models\di \ph$.}
\end{equation}
This follows from the Filtration Lemma, since if $\ab{x}$ and $\ab{y}$ are in the same $R_\Phi$-cluster, then exactly the same formulas of the form $\di \ph$ are true at both of them in $\M_\Phi$.

We will replace $R_\Phi$ by a relation $R'\sub R_\Phi$ in such a way that each $R_\Phi$-cluster $C$ in $\M_\Phi$ is decomposed into  an $R'$-cluster with at most $n$ elements and  (possibly) some singleton (i.e.\ one-element) $R'$-clusters.

We  use letters $\a,\b$ for members of $W_\Phi$. Each such member is a subset of $W$. For each $x\in W$ we write $x\sees\a$ to mean that there is some $y\in\a$ such that $xRy$. This could be read `$x$ can see into $\a$'. We write $x\not\sees\a$ if there is no such $y$.

The next result uses the axiom $\C_n$ to establish a property of $R_\Phi$ that will allow us to refine it into a transitive relation whose clusters have at most $n$ elements.

\begin{lemma} \label{C*}
For any $R_\Phi$-cluster $C$, there is an element $x^*\in W$ such that $\ab{x^*}\in C$ and a subset $C^*\sub C$ such that   $x^* \sees\a$ for all $\a\in C^*$, and for all $y\in W$,  
\begin{equation}  \label{Clast}
\text{if $x^*Ry$ and $\ab{y}\in C$, then $\ab{y}\in C^*$ and $y\sees\a$ for all $\a\in C^*$.}
\end{equation}
Moreover $C^*$ has at most $n$ elements. If $C$ is $R_\Phi$-degenerate then $C^*$ is empty. 
\end{lemma}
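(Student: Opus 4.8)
**Lemma to prove (Lemma \ref{C*}).** For any $R_\Phi$-cluster $C$, there is an element $x^*\in W$ such that $\ab{x^*}\in C$ and a subset $C^*\sub C$ such that $x^* \sees\a$ for all $\a\in C^*$, and for all $y\in W$, if $x^*Ry$ and $\ab{y}\in C$ then $\ab{y}\in C^*$ and $y\sees\a$ for all $\a\in C^*$. Moreover $C^*$ has at most $n$ elements, and if $C$ is $R_\Phi$-degenerate then $C^*$ is empty.

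Let me sketch a proof plan.

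=== BEGIN LATEX ===

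The plan is to build the point $x^*$ and the set $C^*$ together by a maximality argument, and then to invoke the axiom $\C_n$ (via Theorem \ref{sound} applied to $\M$, or more directly via the argument in the proof of Theorem \ref{sound}(1)) to cap the size of $C^*$ at $n$. First I would dispose of the degenerate case: if $C$ is an $R_\Phi$-degenerate cluster, it is a single class $\ab{x}$ with $\ab{x}\,R_\Phi\,\ab{x}$ failing, so by \eqref{presR} no $y$ with $xRy$ can have $\ab{y}=\ab{x}$; taking $x^*$ to be any point of that class and $C^*=\emptyset$, the conditions hold vacuously, and $C^*$ has $0\le n$ elements. So from now on assume $C$ is non-degenerate.

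The main construction is as follows. For a point $x\in W$ with $\ab{x}\in C$, say that $\a\in C$ is \emph{seen from $x$ within $C$} if $x\sees\a$, and let $S(x)=\{\a\in C: x\sees\a\}$. The idea is to choose $x^*$ so that $S(x^*)$ is \emph{maximal} among all the sets $S(x)$ for $x$ ranging over points whose class lies in $C$; then set $C^*=S(x^*)$. I would first check the self-reachability clause: I want $x^*$ chosen so that every $\b\in C^*$ is reachable not just from $x^*$ but from every point $y$ with $x^*Ry$ and $\ab{y}\in C$. The key tool here is that within a single $R_\Phi$-cluster the modal formulas of the form $\di\ph$ are constant, by \eqref{same}: if $\ab{x^*}$ and $\ab{y}$ lie in the same cluster $C$, then $x^*$ and $y$ verify exactly the same diamond-formulas of $\Phi$. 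This will let me transfer ``sees into $\a$'' information between points of $C$ whenever the relevant witnessing formula is recorded in $\Phi$, and hence show that the maximal $S(x^*)$ is in fact seen from every such $y$, giving the clause \eqref{Clast}. The maximality of $S(x^*)$ ensures that any $\ab{y}\in C$ reachable from $x^*$ cannot see strictly more classes of $C$ than $x^*$ does, which is what forces $\ab{y}\in C^*$.

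The heart of the matter — and the step I expect to be the main obstacle — is the size bound $\ab{C^*}\le n$. Here I would argue by contradiction: suppose $C^*$ contained $n+1$ distinct classes $\ab{z_0},\dots,\ab{z_n}$. Because $C$ is non-degenerate and, crucially, each $\ab{z_i}$ is seen from $x^*$ and from every later point of $C$, I can thread a cyclic $R$-path in the original model $\M$ that passes through representatives of all $n+1$ classes and returns — exactly the configuration that the proof of Theorem \ref{sound}(1) turns into a falsifying valuation for an instance of $\C_n$. The delicate point is choosing the propositional variables' valuation so that the ``distinctness'' premise $\bo^*\D_n$ holds: the classes $\ab{z_i}$ are distinct in $W_\Phi$, meaning they are separated by some formula of $\Phi$, but I need them separated by the fresh variables $p_0,\dots,p_n$ of the $\C_n$ instance while simultaneously preserving the cyclic reachability that makes $\P_n$ true. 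I would resolve this by working with a valuation supported on the cluster (sending each $p_i$ to a class $\ab{z_i}$), using \eqref{same} to guarantee that the diamond-formulas needed for the path are genuinely realised in $\M$, and then reading off that the corresponding instance of $\C_n$ fails at $x^*$ in $\M$ — contradicting $\M\models\C_n$. Hence $C^*$ has at most $n$ elements.

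Finally I would reconcile the construction with the exact statement: the element $x^*$ is the chosen maximiser, $C^*=S(x^*)\sub C$, the containment $x^*\sees\a$ for $\a\in C^*$ is immediate from the definition of $S$, and the transfer-of-reachability argument of the previous paragraph supplies both halves of \eqref{Clast}. The degenerate case having been handled separately, and the size bound established, all clauses of the lemma hold. The one routine check I would still perform carefully is that $\ab{x^*}$ genuinely lies in $C$ (not merely that $x^*$ sees into $C$): this follows because the maximiser is drawn from points whose class is in $C$, so membership is built into the choice.

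=== END LATEX ===
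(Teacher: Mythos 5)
There is a genuine gap in your construction of $x^*$, and it sits exactly at the clause of \eqref{Clast} that the rest of the argument depends on. You choose $x^*$ so that $S(x^*)=\{\a\in C: x^*\sees\a\}$ is \emph{maximal}; the paper chooses it so that this set has \emph{least possible size}. The difference matters. For a successor $y$ of $x^*$ with $\ab{y}\in C$, transitivity of $R$ gives only the inclusion $S(y)\sub S(x^*)$. From minimality (and the fact that $y$ is itself a competitor, since $\ab{y}\in C$) one immediately gets $S(y)=S(x^*)$, which is precisely the second half of \eqref{Clast}; from maximality one gets nothing, since maximality does not rule out sets strictly below the maximum. Note that the preimage in $W$ of the cluster $C$ need not be an $R$-cluster of $\M$: it can be a descending structure in which successive points see strictly fewer classes of $C$, so with the maximal choice a successor $y$ may genuinely fail to see some $\a\in C^*$. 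Your proposed repair --- transferring ``sees into $\a$'' between points of $C$ via \eqref{same} --- does not work, because \eqref{same} applies only to formulas $\di\ph$ with $\di\ph\in\Phi$, whereas ``$x\sees\a$'' is expressed by $\di\ph_\a$ for the Boolean combination $\ph_\a$ of members of $\Phi$ that defines the class $\a$, and $\di\ph_\a$ is not in $\Phi$. Since the second half of \eqref{Clast} is exactly what lets one thread the cyclic $R$-path through representatives of $\a_0,\dots,\a_n$ in the size-bound argument, the proof collapses at this point.

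A secondary issue: in the size bound you propose to falsify an instance of $\C_n$ by installing a \emph{fresh valuation} sending each $p_i$ to a class of $C$. But the hypothesis is only that the fixed model $\M$ satisfies every instance of $\C_n$, not that its underlying frame validates $\C_n$; changing the valuation produces a different model about which nothing is assumed. The correct move (and the paper's) is to substitute the defining formulas $\ph_i$ of the classes $\a_i$ for the variables of the scheme, which keeps you inside $\M$. Your remaining outline of the size bound --- disjointness of the classes giving $\bo^*\D_n$, the cyclic path giving $\P_n$ at $x_0$, contradicting $\neg\P_n$ --- is otherwise in line with the paper, as is your treatment of the degenerate case.
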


\begin{proof}
Take any $R_\Phi$-cluster $C$. 
For each $x \in W$, define  $C(x) = \{\a \in C: x \sees \a\}$.  
Choose $x^*\in W$ such that $\ab{x^*} \in C$ and $C(x^*)$ has least possible size, subject to this.  Then define 
$C^* = C(x^*)$, which immediately ensures that  $x^* \sees\a$ for all $\a\in C^*$. 
To prove \eqref{Clast},  suppose $x^*Ry$ and $\ab{y} \in C$.  Then $\ab{y} \in C^*$ by definition of $C^*$, since ${x^{*}}\sees \ab{y} \in C$. Also, $C(y)$ is a subset of $C(x^*)$ by transitivity of $R$, so by minimality of $C(x^*)$ we get $C(y)=C(x^*)$.  Hence if $\a \in C^*$, then $\a \in C(x^*) = C(y)$, so $y \sees  \a$ as required.

Note that if $\ab{x^*}\in C^*$, then $x^*\sees\ab{x^*}$, therefore $\ab{x^*}R_\Phi\ab{x^*}$ by \eqref{presR}, making $C$ non-degenerate. Thus if $C$ is degenerate, then $\ab{x^*}\notin C^*$, but also $C=\{\ab{x^*}\}$ as $C$ is a singleton containing $\ab{x^*}$, hence $C^*$ is empty as it is a subset of $C$.

It remains to show that $C^*$ has at most $n$ elements. This is where the core role of axiom $\C_n$ is played.
Suppose, for the sake of contradiction, that $C^*$ has $n+1$ distinct members $\a_0,\dots,\a_n$. By standard filtration theory, 
these members are definable as subsets of $\M$, i.e.\ for each $i\le n$ there is a formula $\ph_i$ such that for all $y\in W$,
\begin{equation}\label{distinct}
\M,y\models\ph_i \liff  y\in\a_i \quad(\text{iff } \ab{y}=\a_i).
\end{equation}
If $n\ge 1$, since
the $\a_i$'s are distinct equivalence classes under $\sim$ they are pairwise disjoint, and hence  for all $i< j\le n$, the formula 
$\neg(\ph_i\land\ph_j)$   is true in $\M$ at every $y\in W$, therefore so is  $\D_n(\Vec{\ph}{n})$.
Thus by the semantics of $\bo^*$, 
\begin{equation} \label{ante}
\M,x^*\models\bo\nolimits^* \D_n(\Vec{\ph}{n}).
\end{equation}
But also as $\bo^*\D_0(\ph_o)=\bo^*\top$,   \eqref{ante} holds  as well when $n=0$.

Now since $\a_0\in C^*$ we have $x^*\sees\a_0$, hence using \eqref{distinct} we get $\M,x^*\models\di\ph_0$.
Combining this with \eqref{ante} and the fact that  every instance of $\C_n$ is true in $\M$, we get that
$$
\M,x^*\models\di(\ph_0\land\neg\P_n(\Vec{\ph}{n}) ).
$$
 Hence  there is some $x_0$ with $x^*Rx_0$  and 
\begin{equation} \label{con}
\M,x_0\models\ph_0\land\neg\P_n(\Vec{\ph}{n}).
\end{equation}
Therefore $x_0\models\ph_0$ , so  by \eqref{distinct} $\ab{x_0}=\a_0\in C^*$. 
We will now construct an $R$-path $x_0,\dots,x_n$ with $x_i\in\a_i$, hence by \eqref{distinct}  $x_i\models\ph_i$,  for all $i\le n$. 
If $n=0$ we already have $x_0\in\a_0$ and there is nothing further to do. If $n>0$, then assume inductively that for some $k<n$ we have defined an $R$-path $x_0,\dots ,x_k$ with $x_i\in\a_i$ for all $i\le k$. Since  $x^*Rx_0$, by transitivity we get $x^*Rx_k$.
As  $\ab{x_k}=\a_k\in C$, we get $x_k\sees \a_{k+1}\in C^*$ by \eqref{Clast}, so there is some $x_{k+1}\in\a_{k+1}$ such that $x_kRx_{k+1}$.
That completes the inductive construction of the $R$-path $x_0,\dots,x_n$  with $x_i\in\a_i$ for all $i\le n$. 
With one more repetition we observe that $x^*Rx_n$, so $x_n\sees\a_0$ by \eqref{Clast}, hence $x_nRx_{n+1}$ for some $x_{n+1}\in\a_0$, thus $x_{n+1}\models\ph_0$.
But now applying Lemma \ref{Rpath} to the $R$-path $x_0,\dots ,x_n,x_{n+1}$ we conclude that 
$x_0\models\P_n(\Vec{\ph}{n})$.
Since $x_0\models\neg\P_n(\Vec{\ph}{n})$ by  \eqref{con},  this is a contradiction, 
 forcing us to conclude that $C^*$ cannot have more that $n$  elements, and
 completing the proof of Lemma \ref{C*}.  
\end{proof}

Observe that in this lemma, if $R$ is reflexive then $x^*\sees\ab{x^*}\in C$ and so $\ab{x^*}\in C^*$. If also $n=1$ it follows that $C^*=\{\ab{x^*}\}$ and
\begin{equation} \label{vlast}
\text{if $x^*Ry$ and $\ab{y}\in C$, then $\ab{y}=\ab{x^*}$. }
\end{equation}
When \eqref{vlast} holds, $\ab{x^*}$ was called  \emph{virtually last in $C$} by \citet[II.3]{sege:essa71}, who showed that if $\M$ is a model of  S4Grz, then every cluster of $\M_\Phi$ contains a virtually last element. He then transformed $\M_\Phi$ into a semantically equivalent partially ordered model by replacing each cluster by an arbitrary linear ordering of its elements ending with a virtually last element. 

If $n=0$, then $C^*$ is empty, and hence by \eqref{Clast},  $x^*Ry$ implies $\ab{y}\notin C$. 
\citet[II.2]{sege:essa71} showed that if $\M$ is a model of  GL, then every cluster of $\M_\Phi$ contains an  element $\ab{x^*}$ with $x^*$ having this property . He then transformed $\M_\Phi$ into a semantically equivalent strictly partially ordered (i.e.\ transitive and irreflexive) model by replacing each cluster by an arbitrary strict linear ordering of its elements ending with $\ab{x^*}$. 

Lemma \ref{C*}  thus encompasses Segerberg's analysis for S4Grz and GL.
We now proceed to use the lemma to transform $\M_\Phi$ into an equivalent model of circumference at most $n$.
For each $R_\Phi$-cluster $C$, choose and fix a  point $x^*$ and associated set $C^*\sub C$ as given by the lemma.
We will call $x^*$ the \emph{critical point for $C$}.
 Then we define a subrelation $R'$ of $R_\Phi$ to refine the structure of each $R_\Phi$-cluster $C$ by decomposing it into the subset $C^*$ as an $R'$-cluster together with a degenerate $R'$-cluster $\{\a\}$ for each $\a\in C-C^*$.
These singleton clusters all have $C^*$ as an $R'$-successor but are $R'$-incomparable  with each other. So the structure replacing $C$ looks like
$$
\xymatrix{
*{\bullet} \ar[drr]^<{}  &*{\bullet} \ar[dr]^<{\textstyle\{\a\}}  & {\qquad\cdots\cdots\cdots}   &*{\ \bullet^{}} \ar[dl]   \\
& &*{\xy ;<1pc,0pc>:\POS(0,0) +(0,-1.5)*+{C^*}*\cir<20pt>{} \endxy}  &{\hspace{-2.3cm}}
}
$$
with the  bullets being the degenerate $R'$-clusters determined by the  points of $C-C^*$, and the large circle representing $C^*$.  All elements of $W_\Phi$ that $R_\Phi$-precede $C$ continue to $R'$-precede all members of $C$, while elements of $W_\Phi$ that come $R_\Phi$-after $C$ continue to come $R'$-after all members of $C$. Doing this to each cluster of $(W_\Phi,R_\Phi)$ produces a new \emph{transitive} frame $(W_\Phi,R')$ with $R'\sub R_\Phi$.

$R'$ can be more formally defined on $W_\Phi$   by specifying, for all $\a,\b\in W_\Phi$, that $\a R'\b$ iff  either
\begin{itemize}
\item 
$\a$ and $\b$ belong to different $R_\Phi$-clusters and $\a R_\Phi \b$; \enspace or
\item
$\a$ and $\b$ belong to the same $R_\Phi$-cluster $C$ and  $\b\in C^*$.
\end{itemize}
 Thus every element of $C$ is $R'$-related to every element of $C^*$, and the restriction of $R'$ to $C$ is  equal to the relation
 $
 C\times C^*.
 $
So we could also define $R'$ as the union of  these relations $C\times C^*$ for all $R_\Phi$-clusters $C$, plus all inter-cluster instances of $R_\Phi$. If $C$ is $R_\Phi$-degenerate, then $C^*=\emptyset$ by  Lemma \ref{C*}, and so 
$C\times C^*=\emptyset$.  If $C$ is non-$R_\Phi$-degenerate, then the restriction of $R_\Phi$ to $C$ is $C\times C$, extending $C\times C^*$. This implies that $R'$ is a subrelation of $R_\Phi$ on $W_\Phi$.

Note that if $C^*$ is empty, then $C-C^*=C\ne\emptyset$, and all members of $C$ are $R'$-irreflexive. In that case $C$ is replaced in the new frame $(W_\Phi,R')$ by a non-empty set of degenerate $R'$-clusters. In the case $n=0$,  by Lemma \ref{C*} every $R_\Phi$-cluster $C$ has empty $C^*$, and so  $(W_\Phi,R')$ consists entirely of $R'$-irreflexive points and therefore has circumference 0.  In the alternative case  $n\ge 1$, any non-degenerate $R'$-cluster will have the form $C^*$ for some $R_\Phi$-cluster $C$, and so have at most $n$-elements. Since any $R'$-cycle is included in a non-degenerate $R'$-cluster, it follows that all $R'$-cycles have length at most $n$ and  $(W_\Phi,R')$ has circumference at most $n$. So in any case the finite transitive frame $(W_\Phi,R')$ validates $\C_n$ by Theorem \ref{sound}.

Now put $\M'=(W_\Phi,R',V_\Phi)$, a model differing from  $\M_\Phi$ only  in that $R'$ replaces $R_\Phi$. We show that replacing $\M_\Phi$ by $\M'$ leaves the truth relation unchanged for any formula $\ph\in\Phi$: for all $x\in W$,
\begin{equation} \label{truth}
\text{$\M',\ab{x}\models \ph$\enspace iff\enspace$\M_\Phi,\ab{x}\models \ph$. }
\end{equation}
The proof of this proceeds by induction on the formation of $\ph$. If $\ph$ is a variable, then \eqref{truth} holds because $\M'$ and $\M_\Phi$ have the same valuation $V_\Phi$. The induction cases of the Boolean connectives are standard. Now make the induction hypothesis on $\ph$  that  \eqref{truth} holds for all $x\in W$, and suppose $\di \ph\in\Phi$. If 
$\M',\ab{x}\models \di \ph$, then $\ab{x}R'\ab{y}$ and $\M',\ab{y}\models \ph$ for some $y$. Then $\ab{x}R_\Phi\ab{y}$ as $R'\sub R_\Phi$, and $\M_\Phi,\ab{y}\models \ph$ by induction hypothesis. Hence  $\M_\Phi,\ab{x}\models \di \ph$.

Conversely, assume $\M_\Phi,\ab{x}\models \di \ph$. Then $\M,x\models\di \ph$ by the Filtration Lemma \eqref{filtlem}.
Let $C$ be the $R_\Phi$-cluster of $\ab{x}$, and $x^*$ be the chosen critical point for $C$ fulfilling Lemma \ref{C*}. Then 
$\ab{x}$ and $\ab{x^*}$ both belong to $C$, so $\M,x^*\models\di \ph$ by \eqref{same}. Hence there is some $y\in W$ with $x^*Ry$ and $\M,y\models\ph$.
Then  $\M_\Phi,\ab{y}\models \ph$ by the Filtration Lemma \eqref{filtlem}, so $\M',\ab{y}\models \ph$ by induction hypothesis. If $\ab{y}\in C$, then $\ab{y}\in C^*$ by  \eqref{Clast}, so then $\ab{x}R'\ab{y}$ by definition of $R'$ since  $\ab{x}\in C$.
But if $\ab{y}\notin C$, then as $\ab{x^*}R_\Phi\ab{y}$ by \eqref{presR}, and so $\ab{x}R_\Phi\ab{y}$, the $R_\Phi$-cluster of $\ab{y}$ is strictly $R_\Phi$-later than $C$, and again $\ab{x}R'\ab{y}$ by definition of $R'$.
So in any case we have $\ab{x}R'\ab{y}$ and  $\M',\ab{y}\models \ph$, which gives  $\M',\ab{x}\models \di \ph$.
That completes the inductive case for $\di \ph$, and hence proves that \eqref{truth} holds for all $\ph\in \Phi$.

\begin{theorem}  \label{compl}
For  all $n\ge 0$ and any formula $\ph$ the following are equivalent.
\begin{enumerate}[\rm 1.]
\item 
$\ph$ is a theorem of \emph{K4}$\C_n$.
\item
$\ph$ is valid in all transitive frames that have circumference at most $n$ and no strictly ascending chains.
\item
$\ph$ is valid in all finite transitive frames that have circumference at most $n$.
\end{enumerate}
\end{theorem}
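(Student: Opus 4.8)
The plan is to prove the three statements equivalent by establishing the cycle of implications $(1)\Rightarrow(2)\Rightarrow(3)\Rightarrow(1)$, of which the first two are soundness-type observations and the third is the substantive completeness claim. For $(1)\Rightarrow(2)$, let $\K$ be the class of transitive frames that have circumference at most $n$ and no strictly ascending chains. Every frame in $\K$ validates scheme 4 (being transitive) and, by Theorem \ref{sound}(1), validates $\C_n$; since the set of formulas valid throughout $\K$ is a normal logic containing both 4 and $\C_n$, it includes the smallest such logic, namely K4$\C_n$. Hence every K4$\C_n$-theorem is valid in all frames in $\K$. For $(2)\Rightarrow(3)$, I would simply note that a finite transitive frame of circumference at most $n$ automatically has no strictly ascending chains, so the frames in (3) form a subclass of those in (2); validity across the larger class therefore entails validity across the smaller one.

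The real content is the completeness implication $(3)\Rightarrow(1)$, which I would prove by contraposition. Suppose $\ph$ is not a theorem of $L=$K4$\C_n$. Then $\neg\ph$ is $L$-consistent, so it belongs to some maximally $L$-consistent set, yielding a point $x$ of the canonical model $\M_L$ at which, by the Truth Lemma \eqref{truthlemma}, $\ph$ fails: $\M_L,x\not\models\ph$. Because $L$ extends K4 and contains $\C_n$, the canonical relation $R_L$ is transitive and $\M_L\models\C_n$, so $\M_L$ satisfies exactly the hypotheses under which the filtration construction of this section was carried out. Taking $\Phi$ to be the finite, subformula-closed set of subformulas of $\ph$ and forming the transitive filtration $\M_\Phi$, the Filtration Lemma \eqref{filtlem} gives $\M_\Phi,\ab{x}\not\models\ph$.

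It then remains only to pass to the refined model $\M'=(W_\Phi,R',V_\Phi)$ built from $\M_\Phi$ via Lemma \ref{C*}. By the truth-preservation result \eqref{truth}, which holds for every formula of $\Phi$ and in particular for $\ph$, we obtain $\M',\ab{x}\not\models\ph$. Moreover the frame $(W_\Phi,R')$ has already been shown to be finite, transitive, and of circumference at most $n$, its only non-degenerate clusters being the sets $C^*$, each of size at most $n$ by Lemma \ref{C*}. Thus $\ph$ fails in a finite transitive frame of circumference at most $n$, so (3) fails, completing the contrapositive.

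The main obstacle is precisely the cluster-breaking step, but it has already been discharged before the statement of the theorem: the genuine difficulty lies in Lemma \ref{C*}, where the axiom $\C_n$ is invoked to bound the size of $C^*$ by $n$, and in the verification of \eqref{truth}, where one must check that shrinking $R_\Phi$ to $R'$ disturbs no $\di$-formula in $\Phi$, using \eqref{same} together with the minimality built into the critical point $x^*$. Given those results, the proof of Theorem \ref{compl} is a routine assembly of the canonical model, its filtration, and its refinement, and I would expect no further technical difficulty at this stage.
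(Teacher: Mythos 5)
Your proposal is correct and follows essentially the same route as the paper: soundness via Theorem \ref{sound}, the trivial restriction to finite frames, and completeness by contraposition through the canonical model, its transitive filtration, and the cluster-refined model $\M'$ whose key properties were established in Lemma \ref{C*} and \eqref{truth}. No gaps.
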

\begin{proof}
1 implies 2: Let $L$ be the set of formulas that are valid in all transitive frames that have circumference at most $n$ and no strictly ascending chains. Then $L$ is a transitive normal logic that contains $\C_n$ by  Theorem \ref{sound}. Hence $L$ includes K4$\C_n$. 

2 implies 3: This follows immediately from the fact that a finite frame has no strictly ascending chains.

3 implies 1: Put $L=\mathrm{K4}\C_n$. Suppose 1 fails for $\ph$, i.e.\ $\ph$ is not a theorem of $L$.  Then there exists an $x\in W_L$ with $\ph\notin x$, hence $\M_L,x\not\models\ph$ by the Truth Lemma \eqref{truthlemma}.
In the above construction of a finite model $\M'$, let  $\M$ be $\M_L$, and $\Phi$ be the set of all subformulas of $\ph$. Then by \eqref{filtlem} and \eqref{truth},
$\M',\ab{x}\not\models\ph$.
But the frame of $\M'$ is finite, transitive and has circumference at most $n$. This shows that $\ph$ fails to be valid on such a frame, so 3 does not hold for $\ph$.
\end{proof}

This theorem yields an alternative proof that
$\mathrm{K4}\C_{n+1}\sub\mathrm{K4}\C_n$, since any formula valid in all finite transitive frames that have circumference at most $n+1$ is valid in all finite transitive frames that have circumference at most $n$. A transitive frame consisting of a single cycle of length $n+1$ will validate $\mathrm{K4}\C_{n+1}$ but not $\C_n$, showing that that
the logics $\{\mathrm{K4}\C_n:n\ge 0\}$ form a \emph{strictly} decreasing sequence   of extensions of K4.

\begin{corollary}
\emph{K4}$=\bigcap_{n\ge 0}\emph{K4}\C_n$.
\end{corollary}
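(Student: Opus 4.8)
The plan is to establish the two inclusions of $\mathrm{K4}=\bigcap_{n\ge 0}\mathrm{K4}\C_n$ separately, with essentially all the content supplied by the characterisation in Theorem~\ref{compl}. The inclusion $\mathrm{K4}\sub\bigcap_{n\ge 0}\mathrm{K4}\C_n$ is immediate: each $\mathrm{K4}\C_n$ is by construction a normal logic extending K4, so K4 is contained in every member of the family and therefore in their intersection.

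For the reverse inclusion I would reason contrapositively. Suppose $\ph\notin\mathrm{K4}$. Since K4 has the finite model property---being, by the classical filtration argument, complete with respect to the class of all \emph{finite} transitive frames---there is a finite transitive frame $\F$ with $\F\not\models\ph$. As $\F$ is finite it has some finite circumference $n$, namely the size of its largest non-degenerate cluster (or $0$ if $\F$ is irreflexive). Hence $\F$ is a finite transitive frame of circumference at most $n$ refuting $\ph$, so by the equivalence of clauses~1 and~3 of Theorem~\ref{compl} we conclude $\ph\notin\mathrm{K4}\C_n$, and therefore $\ph\notin\bigcap_{m\ge 0}\mathrm{K4}\C_m$. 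Combined with the first inclusion this yields the stated identity. Read directly, the same argument says that any $\ph$ belonging to every $\mathrm{K4}\C_n$ is, by Theorem~\ref{compl}, valid in all finite transitive frames of circumference at most $n$ for each $n$, hence valid in \emph{all} finite transitive frames, and so lies in K4.

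I expect no genuine obstacle in this corollary. The sole external ingredient is the completeness of K4 with respect to finite transitive frames, and the only point needing a word of care is that the bounded-circumference classes exhaust all finite transitive frames as $n$ grows: every finite transitive frame is captured at the level $n$ equal to its own circumference, which is exactly what lets the family $\{\mathrm{K4}\C_n\}$ intersect down to K4 rather than to some strictly larger logic.
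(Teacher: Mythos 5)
Your proof is correct and follows exactly the paper's own argument: the easy inclusion is noted, and the converse uses the finite model property of K4 to get a finite transitive refuting frame, takes $n$ to be its circumference, and applies Theorem~\ref{compl} to conclude $\ph\notin\mathrm{K4}\C_n$. No differences worth noting.
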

\begin{proof}
K4 is a sublogic of K4$\C_n$ for all $n$. For the converse inclusion,
if $\ph$ is not a K4-theorem, then it is invalid in some finite transitive frame $\F$. If $n$ is the size of the largest cycle in $\F$, or 0 if there are no cycles, then $\F$ has circumference at most $n$, so by Theorem \ref{compl}, $\ph$ is not a K4$\C_n$-theorem.
\end{proof}

The proof of Theorem \ref{compl}  establishes something more. It gives a computable upper bound on the size of the falsifying model $\M'$, showing that $\ph$ is a K4$\C_n$-theorem iff it is valid in all finite transitive frames that have circumference at most $n$ and have size at most $2^k$, where $k$ is the number of subformulas of $\ph$. But it is decidable whether a given finite frame is transitive and has circumference at most $n$, so by
well-known arguments \citep[Section 6.2]{blac:moda01}, it follows that it is \emph{decidable} whether or not a given formula is a K4$\C_n$-theorem. 

A potential strengthening of $\C_n$ is to replace $\bo^*$ in its antecedent by $\bo$. But the resulting formula is valid in all finite frames validating K4$\C_n$, so is a theorem of that logic.

\section{Extensions of K4$\C_n$}  \label{extns}

We will now show how to apply and adapt the construction of  $\M'$ to obtain  finite-frame characterisations of various logics that extend K4$\C_n$.
\subsection*{Seriality}

The D-axiom $\di\top$ is valid in a frame iff its relation is \emph{serial}, meaning that every point has an $R$-successor:
$\forall w \exists y(xRy)$. The inclusion of this axiom in a logic ensures that its canonical model is serial, as each point  satisfies  $\di\top$.
We assume now that the transitive model $\M$ having $\M\models\C_n$  also has serial $R$.
We  use this to show that the subrelation $R'$ of $\M'$ is also serial.

Suppose that a point $\a\in W_\Phi$ has an  $R_\Phi$-cluster $C$ that is not $R_\Phi$-final, i.e.\ there is some cluster $C'$ with $CR_\Phi C'$ but not $C'R_\Phi C$. Then any $\b\in C'$ has $\a R'\b$ so is an $R'$-successor of $\a$. Alternatively, if $C$ is final, let $x^*$ be the critical point for $C$. There is a $y$ with $x^*Ry$,
as $R$ is serial. But then $\ab{x^*}R_\Phi\ab{y}$ by \eqref{presR}, and so $\ab{y}\in C$ as $\ab{x^*}\in C$ and $C$ is final. But then $\ab{y}\in C^*$ by \eqref{Clast}. Since every member of $C$ is $R'$-related to every member of $C^*$, we get that $\a R'\ab{y}$, completing the proof that $R'$ is serial and hence the frame of $\M'$ validates $\di\top$.

Since every cluster in a finite transitive frame has a successor cluster that is final, we see that such a frame is serial iff \emph{every final cluster is non-degenerate.}

From all this we can infer that  KD4$\C_n$, the smallest normal extension of K4$\C_n$ to contain $\di\top$, is sound and complete for validity in all finite  transitive frames that have circumference at most $n$ and every final cluster non-degenerate. 

\subsection*{S4$\C_n$}

S4 is the logic K4T, where T is the scheme $\bo\ph\to\ph$.
 A frame validates T iff its relation is reflexive, and the inclusion of T in a logic  ensures that the canonical frame  is reflexive. Assume that $n\geq 1$ and the model $\M\models\C_n$ as above has reflexive $R$, hence $R_\Phi$ is reflexive by \eqref{presR}. Thus no $R_\Phi$-cluster is degenerate.
We modify the definition of $R'$ to make it  reflexive as well, so that the frame of $\M'$ validates T. The change occurs in the case of an $R_\Phi$-cluster $C$ having $C\ne C^*$. Then instead of making the singletons $\{\a\}$ for $\a\in C-C^*$ be degenerate, we make them all into  \emph{simple} $R'$-clusters by requiring that $\a R'\a$. Formally this is done by adding to the definition of $\a R'\b$ the third possibility that
\begin{itemize}
\item 
$\a$ and $\b$ belong to the same $R_\Phi$-cluster $C$, and $\a=\b\in C-C^*$.
\end{itemize}
Equivalently, the restriction of $R'$ to $C$ is equal to $(C\times C^*) \cup\{(\a,\a):\a\in C-C^*\}$.
Since $R_\Phi$ is reflexive, this modified definition of $R'$ still has $R'\sub R_\Phi$, and that is enough to preserve the proof of the truth invariance result \eqref{truth}  for the modified model $\M'$.

From this it follows that S4$\C_n$, the smallest normal extension of K4$\C_n$ to contain the scheme T, is sound and complete for validity in all finite reflexive transitive frames that have circumference at most $n$.

We left out the case $n=0$ here because the addition of $\C_0$ to S4 results in  the inconsistent logic that has all formulas as theorems.
The logics $\{\mathrm{S4}\C_n:n\ge 1\}$ form a {strictly} decreasing sequence  whose intersection is S4.

\subsection*{Linearity}

K4.3 is the smallest normal extension of K4 that includes the scheme
\begin{equation}  \label{point3}
\bo(\ph\land\bo\ph\to\psi)\lor\bo(\psi\land\bo\psi\to\ph).
\end{equation}
A frame validates this scheme iff it is \emph{weakly connected}, i.e. satisfies
$$
\forall x\forall y\forall z(xRy\land xRz\to yRz\lor y=z\lor zRy).
$$
The canonical frame of any normal extension of K4.3 is weakly connected.

If a transitive weakly connected frame is \emph{point-generated}, i.e.\  $W=\{x\}\cup\{y\in W:xRy\}$ for some point $x\in W$, then the frame is  \emph{connected}: it satisfies
$$
\forall y\forall z( yRz\lor y=z\lor zRy).
$$
Such a connected frame can be viewed as a linearly ordered set of clusters.

Now let $L$ be a normal extension of K4.3$\C_n$ and $\ph$ a formula that is not a theorem of $L$. Then there is some $x\in W_L$ with $\ph\notin x$. Put $W=\{x\}\cup\{y\in W_L:xR_Ly\}$  and let  $\M=(W,R,V)$ be the  submodel of $\M_L$ based on $W$. Then   $R$ is transitive, and is connected since $R_L$ is weakly connected and $(W,R)$ is point-generated. Also the fact that $W$ is $R_L$-closed and $\M_L\models\C_n$ ensures that $\M\models\C_n$.
Take $\Phi$ to be the set of subformulas of $\ph$,  and $\M_\Phi$ to be the standard transitive filtration of $\M$ through $\Phi$. Then $\M_\Phi,\ab{x}\not\models\ph$.
Moreover, since $R$ is connected, it follows from  \eqref{presR}  that $R_\Phi$ is connected.

We now modify each $R_\Phi$-cluster $C$ to obtain a suitable model $\M'$ with circumference at most $n$. The way we did this for 
 K4$\C_n$ allows  some leeway in how we define the relation $R'$ on $C-C^*$. The minimal requirement to make the  construction work is that each member of $C-C^*$ forms a singleton cluster  that $R'$-precedes $C^*$. Instead of making these members of $C-C^*$ incomparable with each other, we could form them arbitrarily into a linear sequence under $R'$ that precedes $C^*$, getting a structure that looks like
$$
\xymatrix{
{\bullet}\ar[r]   &{\bullet}\ar[r] & {\ \cdots\cdots }\ar[r]^<{}  &{\bullet}\ar[r]
&{\xy ;<1pc,0pc>:\POS(0,0) +(0,0)*+{C^*}*\cir<20pt>{}\endxy} }
$$
The model $\M'$ will still satisfy \eqref{truth}, so will falsify $\ph$ at $\ab{x}$.
Since the original $R_\Phi$-clusters are linearly ordered by $R_\Phi$, this construction will make the  $R'$-clusters be linearly ordered by $R'$, with the non-degenerate ones being of size at most $n$.
Hence the frame underlying $\M'$ will be connected and validate  K4.3$\C_n$. This leads to the conclusion that
\begin{quote}
 the logic  K4.3$\C_n$ is sound and complete for validity in all finite transitive and connected frames that have circumference  at most $n$.
\end{quote}
The analysis of the scheme T can also be applied here to show further that 
\begin{quote}
for $n\ge 1$, the logic  S4.3$\C_n$ is sound and complete for validity  in all finite reflexive transitive and connected frames that have circumference at most $n$.
\end{quote}

\subsection*{Simple final clusters}

The McKinsey axiom is the scheme
\begin{equation} \label{M}
\text{M}: \quad   \bo\di\ph\to\di\bo\ph.
\end{equation}
The logic S4M is sometimes called S4.1.  Since S4M $\sub$ S4Grz \citep{sobo:fami64}, 
S4M$\C_1$ is just  S4$\C_1$. However   K4M$\C_1$  is stronger than  K4$\C_1$ (see below).

M is equivalent over K to $\di(\bo\ph\lor\bo\neg\ph)$, which we will make use of, and from which the seriality axiom $\di\top$ is derivable. A necessary and sufficient condition for a transitive frame to validate M is
$$
\forall x\exists y(xRy\ \&\ \forall z(yRz \text{ implies }y=z)).
$$
For a finite frame, this is equivalent to requiring that \emph{every final cluster is simple}.
\cite{sege:deci68} gave the first proof of the finite model property for S4M over frames satisfying this  requirement.  Here we use an adaptation of a proof method of \citet[Theorem 5.34]{chag:moda97} that works also for K4M.

Let $\M$ be a transitive model with $\M\models\text{M}$ and $\M\models\C_n$ for some $n\geq 1$.
Let $\Sigma$ be a finite set of formulas that is closed under subformulas (e.g.\ the set of subformulas of some non-theorem of K4M$\C_n$). Let $\Phi$ be the closure under subformulas of
$\{\bo\ph,\bo\neg\ph:\ph\in\Sigma\}$. Then $\Phi$ is still finite and closed under subformulas, so the finite filtration $\M_\Phi$  can be constructed as previously. We show that any $R_\Phi$-final cluster  $C$ is simple. To show that $C$ has one element, it  is enough to show that all members of $C$ satisfy the same formulas from $\Phi$ in $\M_\Phi$. Since all members of the same cluster satisfy the same formulas of the form $\bo\psi$ in any model, the problem reduces to the case of a formula $\ph\in\Sigma$. Take any $x$ with $\ab{x}\in C$. Then $\M,x\models\di(\bo\ph\lor\bo\neg\ph)$ as $\M\models\text{M}$. Hence there exists $y$ with $xRy$ and $\M,y\models\bo\ph\lor\bo\neg\ph$. Then $\ab{x}R_\Phi\ab{y}$, so $\ab{y}\in C$ as $C$ is final. As 
$\bo\ph,\bo\neg\ph\in\Phi$, and one of them is true at $y$  in $\M$, then one of them is true at $\ab{y}$  in $\M_\Phi$. Hence either $\ph$ is true at every member of $C$, or is false at every member of $C$.

That proves that $C$ is a singleton, so $\ab{x}=\ab{y}$ where $y$ is as above, hence $\ab{x}R_\Phi\ab{x}$ and $C=\{\ab{x}\}$is a simple $R_\Phi$-cluster as claimed. Moreover,  $x$ fulfils the requirements to be the critical point $x^*$ of Lemma \ref{C*}, so $C^*=C$, and $C$ becomes a  simple $R'$-cluster in the model $\M'$. We see that the final clusters of $\M'$ are just the final clusters of $\M_\Phi$, which are all simple in $\M'$. 
This leads to the conclusion that for $n\geq 1$,
\begin{quote}
K4M$\C_n$ is sound and complete for validity in all finite transitive  frames that have circumference at most $n$ and all final clusters simple, while  S4M$\C_n$ has the finite model property for the subclass of these frames that are reflexive.
\end{quote}
We also see that K4M$\C_1=$K4D$\C_1$, since the final clusters in a finite K4D$\C_1$-frame are singletons by $\C_1$-validity, and  are non-degenerate by seriality, so are simple. 

\subsection*{Degenerate final clusters}

Substituting $\bot$ for the variable in the L\"ob axiom produces a formula that is equivalent over K to
\begin{equation*} 
\text{\ws}: \quad   \bo\bot\lor\di\bo\bot.
\end{equation*}
A \emph{dead end} in a frame is an element $x$ that has no successor, i.e.\ $\{x\}$ is a degenerate final cluster. The condition for a model to satisfy \ws, or equivalently for its underlying frame to validate \ws, is that every point is either a dead end or has a successor that is a dead end. In a finite transitive frame, this condition is equivalent to requiring that \emph{every final cluster is degenerate.}

Assume now that a transitive model $\M$ having $\M\models\C_n$  also has $\M\models\ws$. The simplest way to ensure that our modified filtration $\M'$ also satisfies \ws\ is to include $\di\bo\bot$ and its subformulas in the finite set $\Phi$. Then using \eqref{filtlem} and \eqref{truth} we can infer that $\M'\models\ws$.
This leads to the conclusion that for $n\geq 1$,
\begin{quote}
K4\ws$\C_n$ is sound and complete for validity in all finite transitive  frames that have circumference at most $n$ and all final clusters degenerate.
\end{quote}

\section{Models on Irresolvable Spaces}  \label{sectop}

There is a topological semantics that interprets modal formulas as subsets of a topological space $X$, interpreting $\bo$ by the interior operator $\int_X$ of $X$ and $\di$ by its  closure operator $\cl_X$. This approach is known as \emph{C-semantics} \citep{bezh:some05} after the interpretation of $\di$ as \emph{C}losure. 
It contrasts with \emph{$d$-semantics}, which we will describe later.

A \emph{topological model} $\M=(X,V)$ on $X$ is given by a valuation $V$ assigning a subset $V(p)$ of $X$ to each variable $p$. A truth set $\M_C(\ph)$ is then defined by induction on the formation of an arbitrary formula $\ph$ by letting $\M_C (p)=V(p)$,  interpreting the Boolean connectives by the corresponding Boolean set operations, and putting $\M_C(\bo\ph)=\int_X(\M_C(\ph))$. Then $\M_C(\di\ph)=\cl_X(\M_C(\ph))$. 
A truth relation $\M,x\models_C\ph$ is defined to mean that $x\in \M_C(\ph)$. Defining an \emph{open neighbourhood} of $x$ to be any open subset of $X$ that contains $x$, we have

\begin{itemize}
\item $\M,x\models_C\bo\varphi$ iff there is an open neighbourhood  of $x$ included in $ \M_C(\ph)$,

\item $\M,x\models_C\di\varphi$ iff every open neighbourhood of $x$ intersects $ \M_C(\ph)$.
\end{itemize}
$\ph$ is \emph{$C$-true in $\M$}, written $\M\models_C\ph$, if $\M,x\models_C\varphi$ for all $x\in X$; and is \emph{C-valid in space $X$}, written $X\models_C\ph$, if it is $C$-true  in all models on $X$. 
For any space $X$, the set $\{\ph:X\models_C \ph\}$ of all formulas $C$-valid in $X$ is a normal logic including S4, called the \emph{C-logic of X}.

A  frame $(W,R)$  has the \emph{Alexandroff topology }on $W$ in which the open sets $O\sub W$ are those that are  \emph{up-sets} under $R$, i.e.\ if $x\in O$ and $xRy$ then $y\in O$. Call the resulting topological space $W_R$. If $R$ is a quasi-order, the interior operator $\int_R$ and closure operator $\cl_R$ of $W_R$ turn out to be given by
\begin{align}
\int\nolimits_R Y&=\{x\in W:\forall y(xRy \text{ implies } y\in Y)\}, \nonumber
\\
\cl\nolimits_R Y&=R^{-1}Y=\{x\in W:\exists y(xRy\in Y)\},  \label{CR}
\end{align}
which are the same operations that interpret $\bo$ and $\di$ in a model $\M=(W,R,V)$ in the Kripkean sense of Section \ref{secmodels}.
Such a  quasi-ordered model   gives rise to the topological model $\M_R=(W_R,V)$.  These two models are semantically equivalent in the sense that
\begin{equation*}   \label{semequiv}
\text{$\M,x\models\ph$\quad iff\quad$\M_R,x\models_C\ph$},
\end{equation*}
for all $x\in W$ and all formulas $\ph$. It follows, in terms of validity, that
\begin{equation}   \label{valequiv}
\text{$(W,R)\models\ph$\quad iff\quad$ W_R\models_C\ph$}.
\end{equation}
This can be used to show that the set of formulas that are $C$-valid in all topological spaces is exactly S4, a result due to \cite{mcki:theo48}, the originators of this kind of topological semantics.

For $n\geq 2$, a  space is called \emph{$n$-resolvable} if it has $n$ pairwise disjoint dense subsets \citep{ecke:reso97}. Here $Y$ is dense in $X$ when $\cl_X Y=X$. Since any superset of a dense set is dense, $n$-resolvability is equivalent to $X$ having a partition into $n$ dense subsets. $X$ is \emph{$n$-irresolvable} if it is not $n$-resolvable, and is  \emph{hereditarily $n$-irresolvable} if every non-empty subspace of $X$ is $n$-irresolvable. 

It is known that S4Grz is the $C$-logic determined by the class of hereditarily $2$-irresolvable spaces (the prefix $n$- is usually omitted when $n=2$). This follows from results of \citealt{esak:diag81} and \citealt{bezh:scat03}, as explained in \cite[pp.~253-4]{bent:moda07}. We will show that  in general S4$\C_n$ is characterised by $C$-validity in all hereditarily $n+1$-irresolvable spaces.

Note that by general topology, the closure operator $\cl_S$ of a subspace $S$ of $X$ satisfies $\cl_S Y= S\cap\cl_X Y$.
Hence a set $Y\sub S$ is dense in $S$ iff $S\sub\cl_X Y$.

\begin{theorem} \label{soundresolv}
For $n\geq 1$,  a topological space $X$ is hereditarily $n+1$-irresolvable iff\/ $X\models_C \C_n$.
\end{theorem}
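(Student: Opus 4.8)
The plan is to prove both implications by contraposition, after two preliminary observations. First, in $C$-semantics the axiom $\bo\ph\to\ph$ is valid, since $\int_X Y\sub Y$ always; hence $\M_C(\bo^*\ph)=\int_X\M_C(\ph)=\M_C(\bo\ph)$, so I may read the antecedent $\bo^*\D_n$ of $\C_n$ as $\bo\D_n$. Second, writing $A_i=V(p_i)$ and unwinding the rule $\M_C(\di\psi)=\cl_X\M_C(\psi)$, the truth set $T:=\M_C(\P_n(\Vec{p}{n}))$ is the nested closure obtained by setting $T_0=\cl_X A_0$ and $T_k=\cl_X(A_{n+1-k}\cap T_{k-1})$, so that $T=T_n$. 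Likewise $\M_C(\di\ph_0)=\cl_X A_0$ and $\M_C(\D_n)$ is the set of points lying in at most one $A_i$.

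For the easy direction, that $X\models_C\C_n$ implies hereditary $(n+1)$-irresolvability, I argue contrapositively. Suppose some nonempty subspace $S$ carries pairwise disjoint $D_0,\dots,D_n$ each dense in $S$, and set $V(p_i)=D_i$. Then the $A_i=D_i$ are globally disjoint, so $\D_n$ and hence $\bo^*\D_n$ is $C$-true everywhere. Density, $S\sub\cl_X D_i$, propagates up the nested closures by induction on $k$: since $D_{n+1-k}\sub S\sub T_{k-1}$ one gets $T_k\supseteq\cl_X D_{n+1-k}\supseteq S$, whence $T_n\supseteq S\supseteq D_0=A_0$. Therefore $A_0\cap(X\setminus T_n)=\emptyset$, so $\M_C(\di(\ph_0\land\neg\P_n))=\cl_X\emptyset=\emptyset$, while $\M_C(\di\ph_0)=\cl_X D_0\supseteq S\neq\emptyset$; the relevant instance of $\C_n$ thus fails at every point of $S$, so $X\not\models_C\C_n$.

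The substantial direction is the converse, that failure of $\C_n$ yields a nonempty $(n+1)$-resolvable subspace. From a point $x$ falsifying an instance of $\C_n$ I extract an open $O\ni x$ on which the $A_i$ are pairwise disjoint (from $\bo\D_n$ at $x$) and with $A_0\cap O\sub T_n$ (from falsity of $\di(\ph_0\land\neg\P_n)$ at $x$); moreover $A_0\cap O\neq\emptyset$ because $x\in\cl_X A_0$ and $O$ is open. The only tool needed is the elementary fact that for open $O$, if $p\in O$ and $p\in\cl_X Z$ then $p\in\cl_X(Z\cap O)$, obtained by intersecting neighbourhoods of $p$ with $O$. The key idea is then to define the pairwise disjoint sets $P_0=A_0\cap O$ and $P_i=A_i\cap O\cap T_{n-i}$ for $1\le i\le n$, and to read off from the nesting of the $T_k$ the cyclic containments $P_i\sub\cl_X P_{i+1}$ with indices mod $n+1$: a point of $P_i$ lies in $T_{n-i}=\cl_X(A_{i+1}\cap T_{n-i-1})$ and in the open set $O$, so by the propagation fact it lies in $\cl_X(A_{i+1}\cap T_{n-i-1}\cap O)=\cl_X P_{i+1}$; here $P_0\sub T_n=\cl_X P_1$ and $P_n\sub T_0=\cl_X A_0=\cl_X P_0$ close the cycle.

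Taking closures throughout gives $\cl_X P_0\sub\cl_X P_1\sub\cdots\sub\cl_X P_n\sub\cl_X P_0$, so all the sets $\cl_X P_i$ coincide with a single nonempty closed set $K:=\cl_X P_0$. Each $P_i$ is then dense in the subspace $K$, since $\cl_K P_i=K\cap\cl_X P_i=K$; the $P_i$ are pairwise disjoint because they lie in the $A_i$, which are disjoint on $O$; and $K\neq\emptyset$ because $P_0=A_0\cap O\neq\emptyset$. Hence $K$ is a nonempty $(n+1)$-resolvable subspace of $X$, so $X$ is not hereditarily $(n+1)$-irresolvable, completing the contrapositive. The crux of the whole argument is this converse, and within it the choice of witnesses $P_i=A_i\cap O\cap T_{n-i}$: intersecting the $A_i$ with the successive closure-sets $T_{n-i}$ is exactly what makes the cyclic containments collapse all their closures onto one set, so that resolvability and nonemptiness ($K\supseteq P_0\neq\emptyset$) fall out at once, with no transfinite or fixed-point construction required.
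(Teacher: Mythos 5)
Your proof is correct and follows essentially the same route as the paper's: your witness sets $P_i=A_i\cap O\cap T_{n-i}$ are exactly the paper's $S_i$, your ``propagation fact'' $O\cap\cl_X Z\sub\cl_X(O\cap Z)$ is the same key lemma, and your cyclic containments $P_i\sub\cl_X P_{i+1}$ reproduce the paper's $\rho$-cycle (the paper takes $\bigcup_i S_i$ rather than the closed set $\cl_X P_0$ as the resolvable subspace, a cosmetic difference). One small presentational slip: the displayed equalities $T_n=\cl_X P_1$ and $\cl_X A_0=\cl_X P_0$ are not literally true (only $\supseteq$ holds in general); what you need, and what your propagation fact does deliver from $P_0\sub T_n\cap O$ and $P_n\sub T_0\cap O$, are the inclusions $P_0\sub\cl_X P_1$ and $P_n\sub\cl_X P_0$.
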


\begin{proof}
Write $\int$ and $\cl$ for the  interior and closure operators of $X$.
Suppose first that $X$ is not hereditarily $n+1$-irresolvable. Then there is a non-empty subspace $S$ of $X$ that has $n+1$ pairwise disjoint dense subsets, say $S_0,\dots, S_n$. Take variables $p_0,\dots, p_n$ and let $\M$ be any model on $X$ for which $\M_C(p_i)=S_i$ for $i\leq n$. The disjointness of the $S_i$'s ensures that the formula  $\D_n(\Vec{p}{n})$ is $C$-true in $\M$ at every point of $X$, hence so is $\bo^*\D_n(\Vec{p}{n})$.

The truth set $\M_C(P_n(\Vec{p}{n}))$  is
$$
\cl(S_1\cap\cl(S_2\cap\cdots \cap\cl(S_n\cap\cl S_0))\cdots).
$$
But by the given density we have that $S_i\sub S\sub \cl S_j$ for all $i,j\leq n$. Thus 
$\cl(S_n\cap\cl S_0)=\cl S_n$, hence
$$
\cl(S_{n-1}\cap \cl(S_n\cap\cl S_0))=\cl(S_{n-1}\cap \cl S_n)=\cl S_{n-1},
$$
etc. Iterating this calculation we conclude that  $\M_C(\P_n(\Vec{p}{n}))=\cl S_1$.
Since $S_0\sub \cl S_1$, it follows that the formula 
$p_0\land\neg\P_n(\Vec{p}{n})$ is $C$-false at every point of $X$, hence so is
$\di(p_0\land\neg\P_n(\Vec{p}{n}) )$.
But  $\M_C(\di p_0)=\cl S_0$, so
$$
\bo\nolimits^*\D_n(\Vec{p}{n}) \to(\di p_0 \to
\di(p_0\land\neg\P_n(\Vec{p}{n}) )
$$
is $C$-false at every point of $\cl S_0$. Since $\emptyset\ne S\sub \cl S_0$, we conclude  that $\C_n$ is not $C$-valid in $X$.

That proves one direction of the theorem. For the other direction,
suppose that some instance
\begin{equation}  \label{Cninst}
\bo\nolimits^*\D_n(\Vec{\ph}{n}) \to(\di \ph_0 \to
\di(\ph_0\land\neg\P_n(\Vec{\ph}{n}) )
\end{equation}
of $\C_n$ is not $C$-valid in $X$, so is false at some point $x$ in some model $\M$ on $X$.
Put $A_i=\M_C(\ph_i)$ for all $i\leq n$ and $D_n=\bigcap_{ i<j\le n}-(A_i\cap A_j)$. Define
$$
P_n=  \cl(A_1\cap\cl(A_2\cap\cdots \cap\cl(A_n\cap\cl A_0))\cdots).
$$
Then $P_n=\M_C(\P_n)$, and the $C$-falsity of  \eqref{Cninst} at $x$ implies that $x\notin \cl(A_0-P_n)$ while   $x\in\cl A_0$ and $x\in\int D_n$. The latter implies that  there is an open neighbourhood $B$ of $x$ with $B\sub D_n$. 
Hence the sets $\{B\cap A_j:j\leq n\}$ are pairwise disjoint.

Since $x\notin \cl(A_0-P_n)$
there is an open neighbourhood $B'$ of $x$ with
\begin{equation}  \label{AminusP}
B'\cap(A_0-P_n)=\emptyset.
\end{equation}
Let $O=B\cap B'$, another  open set containing $x$. Put $S_0=O\cap A_0$ and for $1\leq i < n$, 
$$
S_i=  O\cap A_i\cap\cl(A_{i+1}\cap\cdots \cap\cl(A_n\cap\cl A_0))\cdots),
$$
while $S_n=O\cap A_n\cap\cl A_0$. 
Define $S=S_0\cup\cdots \cup S_n$. We will show that $S_i\sub\cl S_j$ for all $i,j\leq n$. This implies that for each $j\leq n$ we have $S\sub\cl S_j$, so $S_j$ is dense in the subspace $S$. But $S_j\sub O\cap A_j\sub B\cap A_j$, so the $S_j$'s are also pairwise disjoint.
Since $x\in\cl A_0$ we get $O\cap A_0\ne\emptyset$, i.e.\ $S_0\ne\emptyset$. Hence as $S_0\sub \cl S_j$, we get $S_j\ne\emptyset$ for all $j\leq n$. So the $S_j$'s form $n+1$ distinct pairwise disjoint dense subsets of $S$, showing that $S$ is $n+1$-resolvable and therefore $X$ is not hereditarily $n+1$-irresolvable.

It remains to prove that $S_i\sub\cl S_j$ in general. Define a binary relation $\rho$ on subsets of $X$ by putting $Y\rho Z$ iff $Y\sub\cl Z$. By closure algebra $\rho$ is transitive, since $Y\rho Z\rho Z'$ implies
$Y\sub\cl Z\sub\cl\cl Z'=\cl Z'$.
It is enough to prove that the $S_j$'s form a $\rho$-cycle, i.e.\ $S_0\rho S_1\rho\cdots\rho S_n\rho S_0$, for then the transitivity of $\rho$ ensures that $S_i\rho S_j$ for all $i,j\leq n$, as required.

We use the general fact that, since $O$ is open, $O\cap\cl Z\sub\cl(O\cap Z)$ for any $Z$. 
For $1\leq i< n$, let
$$
Z_i=A_i\cap\cl(A_{i+1}\cap\cdots \cap\cl(A_n\cap\cl A_0))\cdots),
$$
and put $Z_n=  A_n\cap\cl A_0$.
Then  if $i\leq n$ we have $S_i=O\cap Z_i$, and if $i<n$ then   $Z_i= A_i\cap\cl Z_{i+1}$. Also $P_n=\cl Z_1$.

To show that $S_0\rho S_1$, note that  $S_0\sub B'\cap A_0$, so from \eqref{AminusP}, $S_0\sub P_n$.
Since $S_0\sub O$, so then 
$$
S_0\sub O\cap P_n=O\cap\cl Z_1\sub\cl(O\cap Z_1)=\cl S_1.
$$ 
 For $S_i\rho S_{i+1}$ when $1\leq i< n$,
 $$
 S_i=O\cap A_i\cap\cl Z_{i+1}\sub O\cap\cl Z_{i+1}\sub\cl(O\cap Z_{i+1})=\cl S_{i+1}.
 $$
 Finally, for $S_n\rho S_0$ we have 
 $S_n=O\cap A_n\cap\cl A_0\sub O\cap\cl A_0\sub\cl(O\cap A_0)=\cl S_0$.
 \end{proof}
 
We can now  clarify the relationship between circumference and irresolvability:
 
\begin{theorem}  \label{lemnresolv}
Let $(W,R)$ be a quasi-ordered frame, and  $n\geq 1$.    

\begin{enumerate} [\rm 1.]
\item
The  topological space $W_R$ is hereditarily $n+1$-irresolvable iff $(W,R)$  has circumference at most $n$ and has  no strictly ascending chains.
\item
If  $W$ is finite, then $W_R$ is hereditarily $n+1$-irresolvable iff $(W,R)$  has circumference at most $n$.
\end{enumerate}
\end{theorem}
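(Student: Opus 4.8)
The plan is to prove both parts by simply chaining together three results already in hand: the topological characterisation of Theorem \ref{soundresolv}, the semantic equivalence \eqref{valequiv} linking Kripke validity in a quasi-order with $C$-validity in its Alexandroff space, and the frame-theoretic characterisation of $\C_n$-validity in Theorem \ref{sound}. Since a quasi-order is in particular transitive, all three results apply to $(W,R)$ and to $W_R$, and the theorem becomes essentially a corollary with no new combinatorial or topological content to supply.

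First I would establish part 1 by reading off a composition of three equivalences. Starting from the left-hand side, Theorem \ref{soundresolv} (which requires $n\geq 1$) gives that $W_R$ is hereditarily $n+1$-irresolvable iff $W_R\models_C\C_n$. Next, because $(W,R)$ is a quasi-order, the equivalence \eqref{valequiv} applies to every formula and hence to every substitution instance of the scheme $\C_n$; this yields $W_R\models_C\C_n$ iff $(W,R)\models\C_n$. Finally, since $(W,R)$ is transitive, Theorem \ref{sound}(1) gives that $(W,R)\models\C_n$ iff $(W,R)$ has circumference at most $n$ and no strictly ascending chains. Composing these equivalences delivers part 1.

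For part 2, when $W$ is finite there can be no strictly ascending chains at all, since in a transitive frame a strictly ascending chain has infinitely many pairwise distinct terms. Thus the extra condition in part 1 is automatically met, and the characterisation collapses to ``circumference at most $n$''. Equivalently, one may run the same topological and semantic steps but invoke Theorem \ref{sound}(2) in place of Theorem \ref{sound}(1).

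The only point needing a little care—rather than a genuine obstacle—is that \eqref{valequiv} is stated for a single formula $\ph$, whereas $\C_n$ is a \emph{scheme}, i.e.\ the set of all its substitution instances. Here one simply notes that validity (respectively $C$-validity) of a set of formulas means validity of each of its members, so the single-formula equivalence lifts verbatim to the whole scheme. Beyond this bookkeeping step there is nothing further to prove.
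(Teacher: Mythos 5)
Your proof is correct and follows essentially the same route as the paper: it chains Theorem \ref{soundresolv}, the equivalence \eqref{valequiv}, and Theorem \ref{sound}, with part 2 following because finite transitive frames have no strictly ascending chains. The remark about lifting \eqref{valequiv} from single formulas to the scheme $\C_n$ is a harmless bookkeeping point the paper passes over silently.
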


\begin{proof}
This is a restatement of Theorem \ref{sound} with `$W_R$ is hereditarily $n+1$-irresolvable' in place of 
`$(W,R)\models\C_n$'.
But by Theorem \ref{soundresolv}, $W_R$ is hereditarily $n+1$-irresolvable iff  $W_R\models_C\C_n$, which by \eqref{valequiv} holds iff $(W,R)\models\C_n$.  
\end{proof}

Given our results on relational semantics for S4$\C_n$, and the equivalence with  $C$-semantics given by \eqref{valequiv}, it follows from the second part of Theorem \ref{lemnresolv} that any non-theorem of S4$\C_n$ is falsifiable in a $C$-model on a finite hereditarily $n+1$-irresolvable space. Hence S4$\C_n$ is complete for $C$-validity in (finite) hereditarily $n+1$-irresolvable spaces. Soundness follows from Theorem \ref{soundresolv}, which ensures that the $C$-logic of any  hereditarily $n+1$-irresolvable space includes S4$\C_n$.
 
 \begin{theorem}   \label{Cntopfmp}
For  all $n\ge 1$ and any formula $\ph$ the following are equivalent.
\begin{enumerate}[\rm 1.]
\item 
$\ph$ is a theorem of \emph{S4}$\C_n$.
\item
$\ph$ is $C$ valid in all hereditarily $n+1$-irresolvable topological spaces.
\item
$\ph$ is $C$-valid in all finite hereditarily $n+1$-irresolvable topological spaces of size at most $2^k$, where $k$ is the number of subformulas of $\ph$.
\end{enumerate}
\end{theorem}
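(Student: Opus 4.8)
The plan is to prove the cycle $1 \Rightarrow 2 \Rightarrow 3 \Rightarrow 1$, combining the two topological characterisations already in hand with the relational finite model property of S4$\C_n$ from Section \ref{extns}.

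For $1 \Rightarrow 2$ (soundness) I would appeal directly to Theorem \ref{soundresolv}: any hereditarily $n+1$-irresolvable space $X$ satisfies $X \models_C \C_n$. Since the $C$-logic of every space is a normal logic containing S4, the $C$-logic of such an $X$ contains both S4 and $\C_n$, hence contains S4$\C_n$ in its entirety; so every S4$\C_n$-theorem is $C$-valid in $X$. The step $2 \Rightarrow 3$ is immediate, since the finite hereditarily $n+1$-irresolvable spaces of size at most $2^k$ form a subclass of all hereditarily $n+1$-irresolvable spaces, and $C$-validity across the larger class entails it across the smaller.

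The content lies in $3 \Rightarrow 1$, which I would argue by contraposition. If $\ph$ is not a theorem of S4$\C_n$, then by the completeness result of Section \ref{extns} it is falsifiable in some finite reflexive transitive frame $(W,R)$ of circumference at most $n$; moreover the filtration underlying that result yields such a frame with $|W| \le 2^k$, where $k$ is the number of subformulas of $\ph$, exactly as in the size analysis following Theorem \ref{compl}. As $(W,R)$ is a finite quasi-order, the equivalence \eqref{valequiv} transfers the failure of $\ph$ on $(W,R)$ to a failure of $C$-validity in the Alexandroff space $W_R$, which shares the underlying set $W$ and so has size at most $2^k$. Finally Theorem \ref{lemnresolv}(2), applied to the finite quasi-order $(W,R)$ of circumference at most $n$, certifies that $W_R$ is hereditarily $n+1$-irresolvable. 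Hence $\ph$ is $C$-falsifiable in a finite hereditarily $n+1$-irresolvable space of size at most $2^k$, so condition 3 fails.

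The one point demanding care — the main obstacle — is the size bound in condition 3. I must verify that the S4$\C_n$ filtration inherits the $2^k$ bound of the K4$\C_n$ construction; this holds because making $R'$ reflexive alters only the relation and not the carrier $W_\Phi$, which injects into the powerset of $\Phi$. Granting that, together with the fact that passing to $W_R$ preserves cardinality, the remainder is simply the chaining together of the already-established equivalences.
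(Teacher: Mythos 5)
Your proof is correct and follows essentially the same route as the paper: soundness via Theorem \ref{soundresolv} together with the fact that every $C$-logic extends S4, and completeness by combining the relational finite model property of S4$\C_n$ from Section \ref{extns} with the equivalence \eqref{valequiv} and Theorem \ref{lemnresolv}(2). Your explicit check that the reflexive modification of $R'$ leaves the carrier $W_\Phi$ (and hence the $2^k$ bound) unchanged is a detail the paper leaves implicit, but it is the right observation.
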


Next we discuss the topological role of the McKinsey axiom M of \eqref{M}. It is $C$-valid in precisely those spaces for which every non-empty \emph{open} subspace is irresolvable \cite[Prop.~2.1]{bezh:scat03}. A point $x$ is \emph{isolated} in a space $X$  if $\{x\}$ is open in $X$. Such a point will belong to one cell of any partition and prevent any other cell from being dense, hence preventing resolvability. So a sufficient condition for $C$-validity of M is that the space is \emph{weakly scattered}, meaning that the set of isolated points is dense, since that ensures that any non-empty open subspace has an isolated point and therefore is irresolvable.
Now in a quasi-ordered frame $(W,R)$, if a final cluster is simple, then since it is open in the Alexandroff space $W_R$, its one element is isolated in $W_R$. If the frame is finite and validates M,  then every point is $R$-succeeded by a point whose cluster is final and therefore simple, implying that $W_R$ is weakly scattered. Combining this with the analysis behind Theorem \ref{Cntopfmp} and the relational characterisation of S4M$\C_n$ at the end of Section \ref{extns}, we conclude that

\begin{quote}
for $n\geq 2$, S4M$\C_n$ is sound and complete for $C$-validity in all (finite) weakly scattered spaces
that are hereditarily $n+1$-irresolvable.
\end{quote}

We turn now to \emph{$d$-semantics}, which interprets $\di$ by the \emph{derived set} operator $\de_X$ of a space $X$.
For a topological model $\M=(X,V)$ it generates truth sets $\M_d(\ph)$ that have
 $\M_d(\di\ph)=\de_X(\M_d(\ph))$,  the set of limit points of $\M_d(\ph)$. The truth relation $\M,x\models_d\ph$ now  means that $x\in \M_d(\ph)$. Defining a \emph{punctured neighbourhood of $x$} to be any set of the form $O-\{x\}$ where $O$ is an open neighbourhood of $x$, we have

\begin{itemize}
\item $\M,x\models_d\di\varphi$ iff every punctured neighbourhood of $x$ intersects $ \M_d(\ph)$,

\item $\M,x\models_d \bo\varphi$ iff there is a punctured neighbourhood  of $x$ included in $ \M_d(\ph)$.
\end{itemize}

A formula $\ph$ is \emph{$d$-valid in $X$}, written $X\models_d\ph$, if  it is $d$-true in every model on $X$.
The \emph{$d$-logic} $\{\ph:X\models_d\ph\}$ of $X$  is a normal logic which always includes the weak transitivity scheme w4, because the operator $\de_X$  always has $\de_X\de_X Y\sub Y\cup\de_X Y$ for all $Y\sub X$.
 The condition for $X$ to $d$-validate the $\di$-version $\di\di\ph\to\di\ph$ of scheme 4 is that in general $\de_X\de_X Y\sub\de_X Y$. This is equivalent to requiring that the derived set $\de_X\{x\}$ of any singleton is closed. A space with this property is called \emph{T$_D$}. In terms of strength, the T$_D$-property lies strictly between the  separation  properties T$_0$ and T$_1$ \citep{aull:sepa62}. 
 A T$_1$ space is one in which the derived set $\de_X\{x\}$ of any singleton is empty. 
 A T$_0$ space is one in which for any two distinct points $x$ and $y$ there is an open set containing one of them and not the other, i.e.\ either $x\notin\cl\{y\}$ or $y\notin\cl\{x\}$. 
 
 Any hereditarily irresolvable space is T$_D$, as
in general  $\cl_X\{x\}=\{x\}\cup\de_X\{x\}$ with $x\notin \de_X\{x\}$ and $\{x\}$  dense in $\cl_X\{x\}$. So if $\cl_X\{x\}$ is irresolvable, then $\de_X\{x\}$ cannot be dense in $\cl_X\{x\}$, hence  $\cl_X\de_X\{x\}$ can only be  $\de_X\{x\}$, i.e.\  $\de_X\{x\}$ is closed.

In $C$-semantics there is no distinction in interpretation between $\bo$ and $\bo^*$, or between $\di$ and $\di^*$. Since $\int_X Y\sub Y\sub\cl_X Y$ we have $\M_C(\bo^*\ph)=\M_C(\bo\ph)$ and $\M_C(\di^*\ph)=\M_C(\di\ph)$.
On the other hand in $d$-semantics, $\di^*$ and $\bo^*$ serve to define
the closure and interior of  $\M_d(\ph)$.
Since in general $\cl_X Y=Y\cup\de_X Y$, we get $\M_d(\di^*\ph)=\cl_X(\M_d(\ph))$, and thus
  $\M_d(\bo^*\ph)=\int_X(\M_d(\ph))$. 
  
Since $\C_n$ defines the class of hereditarily $n+1$-irresolvable spaces under the $C$-semantics (Theorem \ref{soundresolv}),  by replacing every occurrence of $\bo$ and $\di$ in $\C_n$ by $\bo^*$ and $\di^*$ we get a formula that defines the class of hereditarily $n+1$-irresolvable spaces under the $d$-semantics. However, we can more simply use 
$\C_n$ itself:

\begin{theorem} \label{Cn*def}
For $n\geq 1$, a topological space $X$ has $X\models_d\C_n$ iff $X\models_d\C_n^*$ iff $X\models_C\C_n$
iff $X$ is hereditarily $n+1$-irresolvable.
\end{theorem}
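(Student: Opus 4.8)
The plan is to deduce the four-way equivalence from three separate links, only one of which requires new work. The equivalence of $X\models_C\C_n$ with hereditary $n+1$-irresolvability is already Theorem~\ref{soundresolv}, so it may be quoted directly. The equivalence of $X\models_d\C_n$ with $X\models_d\C_n^*$ is a purely proof-theoretic collapse. And the equivalence of $X\models_d\C_n^*$ with hereditary $n+1$-irresolvability is the substantive step, which I would obtain by rerunning the proof of Theorem~\ref{soundresolv} inside the $d$-semantics. Chaining the three links yields the theorem.

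For the collapse, recall that the $d$-logic $\{\ph:X\models_d\ph\}$ is a normal logic that includes w4. One direction is immediate, since $\C_n^*$ is a tautological consequence of $\C_n$: any normal logic containing $\C_n$ contains $\C_n^*$, so $X\models_d\C_n$ implies $X\models_d\C_n^*$. The converse is exactly Theorem~\ref{w44}(2): as the $d$-logic of $X$ is normal and includes both w4 and, by hypothesis, $\C_n^*$, it includes $\C_n$, i.e.\ $X\models_d\C_n^*$ implies $X\models_d\C_n$. No further argument is needed here.

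For the substantive step I would transcribe both halves of Theorem~\ref{soundresolv}, now reading the bare $\di$ inside $\P_n$ and in $\di p_0$ as the derived-set operator $\de$, the starred $\di^*$ as closure $\cl$, and $\bo^*$ as interior $\int$. If $X$ is not hereditarily $n+1$-irresolvable, take pairwise disjoint $S_0,\dots,S_n$ dense in a non-empty subspace $S$ and set $\M_d(p_i)=S_i$. Disjointness makes $\D_n$ true throughout $X$, so $\bo^*\D_n$ holds everywhere. The telescoping calculation of Theorem~\ref{soundresolv} then runs with $\de$ in place of $\cl$: since $S_i\cap S_j=\emptyset$ and $S_i\sub S\sub\cl S_j=S_j\cup\de S_j$, each $S_i$ lies in $\de S_j$, so the nested derived sets collapse to $\M_d(\P_n)=\de S_1\supseteq S_0$. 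Hence $p_0\land\neg\P_n$ has empty truth set, the truth set of $\di^*(p_0\land\neg\P_n)$ is empty as well, yet $\M_d(\di p_0)=\de S_0\supseteq S_1\ne\emptyset$; so $\C_n^*$ fails at every point of $\de S_0$, giving $X\not\models_d\C_n^*$.

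For the reverse direction, suppose an instance of $\C_n^*$ fails at a point $x$ of a $d$-model $\M$ on $X$. Writing $A_i=\M_d(\ph_i)$, the failure yields $x\in\int(D_n)$ where $D_n=\bigcap_{i<j\le n}-(A_i\cap A_j)$, together with $x\in\de A_0$ and $x\notin\cl(A_0-\M_d(\P_n))$, the last because $\di^*$ is closure. From the first I extract an open $B\ni x$ with $B\sub D_n$, so the sets $B\cap A_j$ are pairwise disjoint; from the third an open $B'\ni x$ with $B'\cap(A_0-\M_d(\P_n))=\emptyset$. Over $O=B\cap B'$ I define sets $S_0,\dots,S_n$ exactly as in Theorem~\ref{soundresolv}, but with $\de$ inside the nested expressions, and verify the cycle $S_0\rho S_1\rho\cdots\rho S_n\rho S_0$, where $Y\rho Z$ means $Y\sub\cl Z$. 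The transitivity argument is unchanged; the only new ingredient is the derived-set analogue of the open-set identity, namely $O\cap\de Z\sub\cl(O\cap Z)$ for open $O$, which holds because for $y\in O\cap\de Z$ and any open neighbourhood $U$ of $y$ the punctured neighbourhood $(U\cap O)-\{y\}$ meets $Z$ and hence meets $O\cap Z$. Together with the non-emptiness check (since $x\in\de A_0\sub\cl A_0$ forces $O\cap A_0\ne\emptyset$, whence every $S_j\ne\emptyset$) this exhibits $n+1$ disjoint dense subsets of $S=\bigcup_j S_j$, so $X$ is not hereditarily $n+1$-irresolvable. I expect this derived-set bookkeeping to be the main obstacle: the essential point to get right is that the disjointness forced by $\bo^*\D_n$ is precisely what lets the bare diamonds inside $\P_n$ behave, for the density computation, like the closure-diamonds $\di^*$, which is exactly why $\C_n$ itself, rather than its fully $\bo^*$/$\di^*$-starred variant, already defines the hereditarily $n+1$-irresolvable spaces under the $d$-semantics.
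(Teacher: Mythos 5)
Your proof is correct, and it reaches the theorem by a slightly different decomposition than the paper's. You and the paper handle two of the links identically: the collapse $X\models_d\C_n\Leftrightarrow X\models_d\C_n^*$ via Theorem~\ref{w44}(2) and the $d$-validity of w4, and the quotation of Theorem~\ref{soundresolv} for $X\models_C\C_n\Leftrightarrow$ hereditary $n+1$-irresolvability; your argument that failure of hereditary irresolvability refutes $\C_n^*$ under $d$-semantics (via $S_i\sub\cl S_j=S_j\cup\de S_j$ and disjointness forcing $S_i\sub\de S_j$, so the nested derived sets telescope to $\de S_1\supseteq S_0$) is also exactly the paper's. The divergence is in the remaining link. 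The paper closes the cycle with the short observation that a $d$-counterexample to $\C_n^*$ is already a $C$-counterexample to $\C_n$: since $\de Y\sub\cl Y$, one has $\M_d(\P_n)\sub\M_C(\P_n)$, hence $\cl(A_0-\M_C(\P_n))\sub\cl(A_0-\M_d(\P_n))$, and the three falsity conditions transfer verbatim; Theorem~\ref{soundresolv} then does the topological work. You instead rerun the hard direction of Theorem~\ref{soundresolv} inside the $d$-semantics, constructing the sets $S_i$ with $\de$ in place of $\cl$ and verifying the $\rho$-cycle using the (correct) auxiliary fact $O\cap\de Z\sub\cl(O\cap Z)$ for open $O$. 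Your route is self-contained in $d$-semantics and makes explicit why the bare diamonds in $\P_n$ behave like closure diamonds under the disjointness hypothesis, but it duplicates a page of bookkeeping that the paper's $\de\sub\cl$ comparison lets you inherit for free; the paper's route is shorter but relies on routing the implication through the $C$-semantics.
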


\begin{proof}
$X\models_d\C_n$ implies $X\models_d\C_n^*$ because $\C_n^*$ is a tautological consequence of $\C_n$.

Conversely $X\models_d\C_n^*$ implies $X\models_d\C_n$, because if $X\models_d\C_n^*$ then the $d$-logic of $X$ includes $\C_n^*$ and w4 (since w4 is $d$-valid in all spaces), so by Theorem \ref{w44}(2) this $d$-logic includes $\C_n$, hence $X\models_d\C_n$.

We already saw in Theorem \ref{soundresolv} that 
$X$ is hereditarily $n+1$-irresolvable iff $X\models_C \C_n$.
Next we show that $X\models_C\C_n$ implies $X\models_d \C_n^*$.
For, if  $X\not\models_d \C_n^*$, then
some instance of $\C_n^*$ of the form
\begin{equation}  \label{Cn*inst}
\bo\nolimits^*\D_n(\Vec{p}{n}) \to(\di p_0 \to
\di^*(p_0\land\neg\P_n(\Vec{p}{n}) )
\end{equation}
 is not $d$-valid in $X$, so is $d$-false at some point $x$ in some model $\M=(X,V)$.
Put $A_i=V(p_i)$ for all $i\leq n$ and $D_n=\bigcap_{ i<j\le n}-(A_i\cap A_j)$. Then
\begin{align*}
\M_d(\P_n)  &=  \de(A_1\cap\de(A_2\cap\cdots \cap\de(A_n\cap\de A_0))\cdots), \enspace \text{and}
\\
\M_C(\P_n) &=  \cl(A_1\cap\cl(A_2\cap\cdots \cap\cl(A_n\cap\cl A_0))\cdots).
\end{align*}
The $d$-falsity of  \eqref{Cn*inst} at $x$ implies that $x\in\int D_n$ and  $x\in\de A_0$ but
$
x\notin \cl(A_0-\M_d(\P_n) ).
$
As $\de Y\sub\cl Y$ in general,   $\M_d(\P_n) \sub \M_C(\P_n) $.   Hence as $\cl$ preserves set inclusion, 
$$
\cl(A_0-\M_C(\P_n) )\sub \cl(A_0-\M_d(\P_n) ).
$$
Therefore  $x\notin \cl(A_0-\M_C(\P_n) )$. But $x\in\int D_n$ and  $x\in\de A_0\sub\cl A_0$, and these facts together ensure that 
$$
\M,x\not\models_C
\bo\nolimits^*\D_n(\Vec{p}{n}) \to(\di p_0 \to
\di(p_0\land\neg\P_n(\Vec{p}{n}) ).
$$
Therefore $X\not\models_C \C_n$ as required.

Finally, to complete the cycle of implications we show that $X\models_d \C_n^*$ implies that  $X$ is  hereditarily $n+1$-irresolvable.
For if  $X$ is not hereditarily $n+1$-irresolvable, then there is a non-empty subspace $S$ of $X$ that has $n+1$ pairwise disjoint dense subsets, say $S_0,\dots, S_n$. Take variables $p_0,\dots, p_n$ and let $\M$ be any model on $X$ for which $\M_d(p_i)=S_i$ for $i\leq n$. The disjointness of the $S_i$'s ensures that the formula  $\D_n(\Vec{p}{n})$ is $d$-true in $\M$ at every point of $X$, hence so is $\bo^*\D_n(\Vec{p}{n})$.

The truth set  $\M_d(\P_n(\Vec{p}{n}))$ is
$$
\de(S_1\cap\de(S_2\cap\cdots \cap\de(S_n\cap\de S_0))\cdots).
$$
By the given density we have that $S_i\sub S\sub \cl S_j$ for all $i,j\leq n$.  But now we observe that as $\cl S_j=S_j\cup\de S_j$, and the $S_i$'s are pairwise disjoint,  this implies that
\begin{equation*}
\text{$S_i\sub \de S_j$ for all $i\ne j\leq n$.}
\end{equation*}
Thus 
$\de(S_n\cap\de S_0)=\de S_n$, hence
$$
\de(S_{n-1}\cap \de(S_n\cap\de S_0))=\de(S_{n-1}\cap \de S_n)=\de S_{n-1},
$$
etc. Iterating this calculation we conclude that  $\M_d(\P_n)=\de S_1$.
Since $S_0\sub \de S_1$, it follows that the formula 
$p_0\land\neg\P_n(\Vec{p}{n})$ is $d$-false at every point of $X$, hence so is
$\di^*(p_0\land\neg\P_n(\Vec{p}{n}) )$.
But  $\M_d(\di p_0)=\de S_0$, so
$$
\bo\nolimits^*\D_n(\Vec{p}{n}) \to(\di p_0 \to
\di^*(p_0\land\neg\P_n(\Vec{p}{n}) )
$$
is $d$-false at every point of $\de S_0$.
Since $\de S_0\ne\emptyset$ (e.g.\ from $\emptyset\ne S\sub \cl S_1$ we get $S_1\ne\emptyset$, but $S_1\sub\de S_0$),
 we conclude  that $X\not\models_d\C_n^*$.
\end{proof}

We saw earlier that every  hereditarily irresolvable space is T$_D$ and hence T$_0$. The converse is true for finite spaces.
The essential reason is that a finite T$_0$ space is \emph{scattered}, meaning that any non-empty subspace has an isolated point, which ensures that it is irresolvable.   The properties of being  hereditarily irresolvable, scattered, or T$_0$ are equivalent for finite spaces   \citep[Corollary 4.8]{bezh:scat03}.
Also, a finite space is T$_0$ iff it  is T$_D$  \citep[Corollary 5.1]{aull:sepa62}. So if $X$ is finite, in the case $n=1$ we can add `$X$ is scattered', `$X$ is T$_D$' and  `$X$ is T$_0$' to the list of equivalent conditions in Theorem \ref{Cn*def}.

The logic K4$\C_1$, in the form K4Grz$_\Box$, was shown by  \cite{gabe:topo04} to be complete for $d$-validity in hereditarily irresolvable spaces (see also \cite[]{bezh:k4gr10}). This was done by  showing that for any finite  K4Grz$_\Box$-frame $\F$ there exists an hereditarily irresolvable space $X$ having a mapping from $X$ onto $\F$ that ensures that $X\models_d\ph$ implies $\F\models\ph$. The construction makes $X$ infinite, so does not provide the finite model property for K4$\C_1$ under $d$-semantics. In fact none of the logics  K4$\C_n$ with $n\geq 1$  have this finite model property.
More generally, if a normal extension of K4  does have the finite model property under $d$-semantics, then it must be an extension of the G\"odel-L\"ob logic GL (which  K4$\C_n$ is not when $n\geq 1$). This is because a finite space that $d$-validates K4 is a finite T$_D$ space, so  is scattered, as explained in the previous paragraph.
But \cite{esak:diag81} showed that the L\"ob axiom is $d$-valid in precisely the scattered spaces.  
Thus any finite space $d$-validating K4 must $d$-validate GL.

For $n\geq 2$, the logic K4$\C_n$  is characterised by $d$-validity in all hereditarily $n+1$-irreducible T$_D$ spaces. This will be shown in another article.

\section{Generating Varieties of Algebras}

Modal formulas have algebraic models, and the algebraic models of a logic form a \emph{variety}, i.e.\ an equationally definable class. Our Theorem \ref{compl} can be converted into a demonstration that the variety $\V_n$ of algebraic models of K4$\C_n$ is generated by its finite members, and indeed generated by certain finite algebras constructed out of finite transitive frames of circumference at most $n$. This implies that every member of $\V_n$ is a homomorphic image of a subalgebra of a direct product of such finite algebras. But something stronger can be shown: every member of $\V_n$ is isomorphic to a subalgebra of an  ultraproduct of such finite algebras. A proof of this algebraic fact will now be given that makes explicit use of the filtration construction underlying Theorem \ref{compl} along with further logical analysis involving the universal sentences that are satisfied by members of $\V_n$.

We briefly review the modal algebraic semantics (a convenient reference for more information is
 \citealt[Chapter 5]{blac:moda01}).
A \emph{normal modal algebra} has the form $\A=(\B,f^\A)$ with $\B$ a Boolean algebra and $f^\A$ a unary operation on $\B$ that preserves all finite meets (including the empty meet as the greatest element 1).
A modal formula can be viewed as a term of the language of $\A$, treating its variables as ranging over the individuals of $\B$, with  $\neg$ and $\land$  denoting the complement and meet operation of $B$,
 $\top$  denoting 1 and $\bo$  denoting $f^\A$. If $\ph$ has its variables among $\Vec{p}{k-1}$, then it induces a $k$-ary term function $\ph^\A$ on $\A$. We say that
  $\ph$ is \emph{valid in $\A$} when $\A\models\ph\approx\top$, meaning that  the equation $\ph\approx\top$ is satisfied in $\A$ in the usual sense from equational logic that $\A\models(\ph\approx\top)[\vv{a}]$, i.e.\ $\ph^\A(\vv{a})=1$, for all $k$-tuples $\vv{a}$ of elements of $
  \A$. The satisfaction of any equation $\ph\approx\psi$ in $\A$ is expressible as the validity of a modal formula, since $\A\models\ph\approx\psi$ iff the formula 
  $\ph\leftrightarrow\psi$ is valid in $\A$, i.e.\ iff $\A\models(\ph\leftrightarrow\psi)\approx\top$.
  
  Identifying $\ph$ with the equation $\ph\approx\top$, we can now  legitimately write $\A\models\ph[\vv{a}]$ to mean that
  $\ph^\A(\vv{a})=1$.
  
  An algebra $\A$ will be called \emph{transitive} if it validates the scheme 4, i.e.\ the formula $\bo\ph\to\bo\bo\ph$ is valid in $\A$ for all  $\ph$. Now the set $L_\A$ of all modal formulas that are valid in $\A$ is a normal logic that is closed under  uniform substitution of formulas for variables, so for $\A$ to validate scheme 4 it is enough that it validates
  $\bo p\to\bo\bo p$ for a variable $p$, which amounts to requiring that $f^\A a\le f^\A f^\A a$ for every element $a$ of $A$.

 Each frame $\F=(W,R)$ has an associated algebra $\F^+=(\mathcal{P} W,[R])$, where $\mathcal{P} W$ is the Boolean set algebra of all subsets of $W$ and the unary operation $[R]$ on $\mathcal{P} W$ is defined by 
 $[R]X=\{x:\forall y(xRy \text{ implies }y\in X)\}$. $\F^+$ is called the \emph{complex algebra of $\F$}.
 A \emph{complex algebra} more generally is defined as one that is a subalgebra of some algebra of the form $\F^+$. 

 Given such a subalgebra $\A$ of $\F^+$, consider a model $\M=(\F,V)$ on $\F$ and a formula $\ph(\Vec{p}{k-1})$ such that $V(p_i)\in\A$ for all $i<k$. Then it can be shown that
 $$
 \ph^\A(\Vec{p^\M}{k-1})=\ph^\M,
 $$
 where  $\psi^\M$ is the truth set $\{x:\M,x\models \psi\}$. From this it follows that
 \begin{equation} \label{truthsetA}
\A\models\ph[\Vec{p^\M}{k-1}] \quad\text{iff}\quad \M\models\ph.
\end{equation}
 Using this in the case that $\A=\F^+$ leads to a proof that a formula $\ph$ is valid in $\F$, i.e.\ $\M\models\ph$ for all models $\M$ on $\F$,
  iff it is valid in the normal modal algebra $\F^+$ in the sense defined here that $\F^+\models \ph\approx\top$
 (see \citealp[Prop.~5.24]{blac:moda01} for details of this analysis).
 
 The famous representation theorem of \citet{jons:bool51} showed that any normal modal algebra $\A$ is isomorphic to a complex algebra, i.e.\ there is a monomorphism $\A\rightarrowtail\F^+$ for some frame $\F$. Moreover they showed that certain equational properties are preserved in passing from $\A$ to $\F^+$. In particular they proved that if $\A$ is a transitive algebra then so is $\F^+$, and furthermore that this implies that the binary relation $R$ of $\F$ is transitive.

We use the standard symbols $\bH$, $\bS$, $\bP$, $\Pu$ for the class operations of closure under homomorphic images, isomorphic copies of subalgebras, direct products and ultraproducts respectively. A class of algebras is a variety iff it is closed under $\bH$, $\bS$ and $\bP$. The smallest variety containing a given class of algebras $\K$ is $\bH\bS\bP \K$, which is called the variety \emph{generated by} $\K$. It is the class of all models of the  \emph{equational theory of} $\K$, which is the set of all equations  satisfied by $\K$.

Let $\V_n$ be the variety of all algebras that validate all theorems of the logic $\mathrm{K4}\C_n$. A sufficient condition for membership of an algebra  $\A$ in $\V_n$ is that $\A$ is a normal modal algebra that validates the schemes 4 and $\C_n$.
This is because if the logic $L_\A$ comprising all modal formulas that are valid in $\A$  includes 4 and $\C_n$, then it includes $\mathrm{K4}\C_n$ since the latter is the smallest logic to include these schemes. As explained above, for $L_\A$ to include a scheme it suffices for it to contain a  variable instance of it. Thus $\V_n$ is defined by finitely many equations.

Let $\CC_n$ be the class of all finite transitive frames of circumference at most $n$, and
$\CC_n^+=\{\F^+:\F\in\CC_n\}$ the class of all complex algebras of members of $\CC_n$. Each $\F^+\in\CC_n^+$ validates 4 and $\C_n$, since $\F$ does, so $\CC_n^+\sub \V_n$. 
We then have
\begin{equation}\label{inclusions}
\bS\bP_U\CC_n^+\sub \bH \bS\bP\CC_n^+\sub\V_n.
\end{equation}
The first of these inclusions holds because the variety $ \bH \bS\bP\CC_n^+$ includes  $\CC_n^+$ and is closed under subalgebras and ultraproducts. The second holds because $\V_n$ is closed under $ \bH$, $ \bS$, and $\bP$. We will show  that both inclusions are equalities, so the three classes displayed in \eqref{inclusions} are identical. To prove this we need some background theory about the \emph{universal sentences} that are satisfied in $\V_n$.

A universal sentence in the language of modal algebras has the form $\forall \vv{p}\sig$, where  the formula $\sig$ is quantifier-free, so is a Boolean combination of equations, and $\forall \vv{p}$ is a sequence of universal quantifiers including  those for all the variables of $\sig$.  The following result is a standard fact in the model theory of universal sentences.

\begin{lemma}\label{univ}
If every universal sentence satisfied by a class $\K$ of algebras is satisfied by algebra $\A$, then $\A$ is embeddable into an ultraproduct of members of $\K$, i.e.\ $\A\in\bS\Pu \K$.
\end{lemma}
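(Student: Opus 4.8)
The plan is to prove this by the classical method of diagrams, using a direct ultraproduct construction so that \L o\'s's theorem converts finite satisfiability over $\K$ into genuine satisfiability. Let $\mathcal{L}$ be the first-order language of modal algebras and $\mathcal{L}(\A)$ its expansion by a fresh constant $c_a$ for each element $a$ of $\A$. Let $\Delta$ be the \emph{quantifier-free diagram} of $\A$, i.e.\ the set of all atomic and negated atomic $\mathcal{L}(\A)$-sentences that hold in $\A$ when each $c_a$ is interpreted as $a$. By the diagram lemma, an embedding of $\A$ into an $\mathcal{L}$-structure exists precisely when that structure can be expanded to a model of $\Delta$; so it suffices to exhibit a single ultraproduct of members of $\K$ that models $\Delta$.

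The key step is a contrapositive observation that turns unsatisfiability over $\K$ into a universal sentence, and it is exactly here that the hypothesis is used. Given any finite $\Delta_0\sub\Delta$, replace the finitely many constants $c_{a_1},\dots,c_{a_m}$ occurring in its conjunction by variables to obtain a quantifier-free formula $\theta(\vv{x})$ with $\A\models\theta[a_1,\dots,a_m]$, so that $\A\models\exists\vv{x}\,\theta$. I claim some member of $\K$ also satisfies $\exists\vv{x}\,\theta$. For otherwise every member of $\K$ would satisfy the universal sentence $\forall\vv{x}\,\neg\theta$, which would then belong to the universal theory of $\K$ and hence, by hypothesis, hold in $\A$, contradicting $\A\models\exists\vv{x}\,\theta$. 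Thus each finite $\Delta_0\sub\Delta$ is satisfied in some $\mathcal{B}_{\Delta_0}\in\K$ under a suitable interpretation of the constants appearing in $\Delta_0$; interpreting the remaining constants arbitrarily turns each $\mathcal{B}_{\Delta_0}$ into an $\mathcal{L}(\A)$-structure.

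With these witnesses fixed I would assemble the ultraproduct directly. Take the index set $I$ to be the set of all finite subsets of $\Delta$, and for $i\in I$ let $\mathcal{B}_i$ be the witness chosen for $\Delta_0=i$. For each $\delta\in\Delta$ put $\hat\delta=\{i\in I:\delta\in i\}$; since $\hat\delta_1\cap\cdots\cap\hat\delta_k$ contains the point $\{\delta_1,\dots,\delta_k\}$, the family $\{\hat\delta:\delta\in\Delta\}$ has the finite intersection property and extends to an ultrafilter $U$ on $I$. Because $\mathcal{B}_i\models\delta$ whenever $\delta\in i$, we have $\{i:\mathcal{B}_i\models\delta\}\supseteq\hat\delta\in U$ for every $\delta\in\Delta$, so \L o\'s's theorem yields $\prod_U\mathcal{B}_i\models\delta$. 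Hence $\prod_U\mathcal{B}_i$ models the whole of $\Delta$, and the map $a\mapsto(c_a)^{\prod_U\mathcal{B}_i}$ embeds $\A$ into this ultraproduct of members of $\K$, giving $\A\in\bS\Pu\K$.

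I expect no serious obstacle, as this is a standard model-theoretic fact; the only point requiring care is coherence in the diagram-to-embedding translation, namely that for a fixed $\delta$ the constants occurring in $\delta$ are interpreted in $\mathcal{B}_i$ so as to witness the conjunction of $i\ni\delta$, which is what makes $\{i:\mathcal{B}_i\models\delta\}$ large. The logical heart of the argument, the passage from ``$\exists\vv{x}\,\theta$ fails throughout $\K$'' to ``$\forall\vv{x}\,\neg\theta$ lies in the universal theory of $\K$'', is immediate and isolates precisely the role of the hypothesis.
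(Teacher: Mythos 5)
Your proof is correct: the paper itself does not spell out an argument but simply cites Burris and Sankappanavar (Theorem 2.20 of Section V.2), and your diagram-plus-ultraproduct construction is precisely the standard argument given there, with the hypothesis entering exactly where you isolate it (a finite fragment of the diagram unsatisfiable throughout $\K$ would yield a universal sentence in the universal theory of $\K$ that fails in $\A$). The coherence point you flag at the end is handled correctly, since each $\mathcal{B}_i$ is made into a full $\mathcal{L}(\A)$-structure before \L o\'s's theorem is applied to the individual sentences of $\Delta$.
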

\begin{proof}
See \citealp[Section V.2]{burr:univ81}, especially the proof of Theorem 2.20.
\end{proof}
This result implies that $\bS\Pu \K$ is the class of all models of the \emph{universal theory of} $\K$, which is the set of all universal sentences satisfied by $\K$.

\begin{theorem}
For any $n\ge 0$,  $\V_n=\bH \bS\bP\CC_n^+= \bS\bP_U\CC_n^+$.
\end{theorem}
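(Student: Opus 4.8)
The plan is to establish the single inclusion $\V_n\sub\bS\bP_U\CC_n^+$, since together with \eqref{inclusions} this forces $\bS\bP_U\CC_n^+$, $\bH\bS\bP\CC_n^+$ and $\V_n$ to coincide. By Lemma \ref{univ} it suffices to show that every universal sentence satisfied by $\CC_n^+$ is satisfied by an arbitrary $\A\in\V_n$, and I would argue this in contrapositive form: if a universal sentence $\forall\vv{p}\,\sig$ fails in $\A$, I would produce a member of $\CC_n^+$ in which it fails. Because $\sig$ is a Boolean combination of finitely many equations, it is enough to find a frame $\F'\in\CC_n$ and a valuation on $\F'$ under which each of those finitely many atomic equations has the \emph{same} truth value as it has in $\A$ under the falsifying tuple; the Boolean combination then inherits the same (false) value, contradicting $\CC_n^+\models\forall\vv{p}\,\sig$.

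To build $\F'$, first observe that $\A\in\V_n$ is transitive, so by the Jónsson--Tarski theorem \citep{jons:bool51} there is an embedding $\A\rightarrowtail\F^+$ into the complex algebra of a transitive frame $\F=(W,R)$. The essential move is to realise the falsifying tuple by a model $\M=(\F,V)$ whose valuation \emph{takes all its values inside} $\A$: put $V(p)\in\A$ for every variable $p$, using the falsifying elements for the variables of $\sig$. Since $\A$ is a subalgebra closed under the term operations, every truth set $\ph^\M$ then lies in $\A$; and because $\A$ validates the variable instance of $\C_n$, we have $\C_n(\ph_0,\dots,\ph_n)^\M=\C_n^\A(\ph_0^\M,\dots,\ph_n^\M)=W$ for all formulas $\ph_i$ by \eqref{truthsetA}. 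Hence every instance of $\C_n$ is true in $\M$, i.e.\ $\M\models\C_n$. Thus $\M$ is precisely a transitive, $\C_n$-satisfying model of the kind to which the filtration construction of Section \ref{fmpn} applies.

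Next I would take $\Phi$ to be the (finite) closure under subformulas of the set of biconditionals $s\leftrightarrow t$, one for each atomic equation $s\approx t$ occurring in $\sig$, and carry out the construction of Section \ref{fmpn} to obtain the finite model $\M'=(W_\Phi,R',V_\Phi)$. Its frame $\F'=(W_\Phi,R')$ is transitive of circumference at most $n$, so $\F'\in\CC_n$ and $\F'^+\in\CC_n^+$. By the Filtration Lemma \eqref{filtlem} together with the truth-invariance \eqref{truth}, for every $\ph\in\Phi$ and every $x$ we have $\M',\ab{x}\models\ph$ iff $\M,x\models\ph$; since $x\mapsto\ab{x}$ is onto $W_\Phi$, each biconditional $s\leftrightarrow t$ is true throughout $\M'$ iff it is true throughout $\M$. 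Applying \eqref{truthsetA} at both ends, each atomic equation $s\approx t$ holds in $\F'^+$ under $V_\Phi$ exactly when it holds in $\A$ under the falsifying tuple. Consequently $\sig$ is false in $\F'^+$ under $V_\Phi$, so $\F'^+\not\models\forall\vv{p}\,\sig$, contradicting $\CC_n^+\models\forall\vv{p}\,\sig$. This yields $\V_n\sub\bS\bP_U\CC_n^+$ and hence the theorem.

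The step I expect to be the main obstacle, and indeed the conceptual core of the argument, is the insistence that $V$ take all its values inside the subalgebra $\A$. This is what converts the algebraic hypothesis ``$\A$ validates $\C_n$'' (a statement about all tuples drawn from $\A$) into the relational hypothesis ``$\M\models\C_n$'' (all substitution instances true in the single model $\M$) that the filtration of Section \ref{fmpn} demands; the frame $\F$ supplied by Jónsson--Tarski need not itself have bounded circumference, so without the $\A$-valued restriction the construction would fail to deliver a frame in $\CC_n$. Everything else is routine bookkeeping: the reduction of $\sig$ to the truth values of its atomic equations, and the transfer of those values along the filtration via \eqref{truthsetA}, \eqref{filtlem} and \eqref{truth}.
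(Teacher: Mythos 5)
Your proof is correct and follows essentially the same route as the paper's: reduce to $\V_n\sub\bS\bP_U\CC_n^+$ via Lemma \ref{univ}, represent $\A$ as a subalgebra of $\F^+$ by J\'onsson--Tarski, restrict the valuation to $\A$-valued truth sets so that $\M\models\mathrm{K4}\C_n$, and transfer the falsifying assignment through the filtration of Section \ref{fmpn} into a member of $\CC_n^+$. The only difference is cosmetic: the paper first puts $\sig$ into conjunctive normal form and falsifies a single clause, whereas you preserve the truth value of every atomic equation directly, which works equally well.
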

\begin{proof}
By \eqref{inclusions} it suffices to show that any member of $\V_n$ belongs to $\bS\bP_U\CC_n^+$.  So take any 
$\A\in\V_n$. To show that $\A\in \bS\bP_U\CC_n^+$, it is enough by  Lemma \ref{univ} to show that every universal sentence satisfied by  $\CC_n^+$ is satisfied by $\A$. We prove the contrapositive of this.

Let $\forall \vv{p}\sig$ be a universal sentence that is not satisfied by $\A$. We will show it is not satisfied by (some member of) $\CC_n^+$. We can suppose that $\sig$ is in conjunctive normal form, so that the sentence has the shape
 $\forall \vv{p}(\bigwedge_{i<m}\sig_i)$, where each conjunct $\sig_i$ is a disjunction of equations and negations of equations.
The sentence is then equivalent to  $\bigwedge_{i<m}\forall \vv{p}\sig_i$, so there must be some $i<m$ such that 
$\A\not\models\forall \vv{p}\sig_i$. This $\sig_i$ has the form
$$
\Big(\bigvee\nolimits_{h<k}\ph_h\Big) \lor \Big( \bigvee\nolimits_{j<l}\neg\psi_j \Big),
$$
where the $\ph_h$'s and $\psi_j$'s are equations. Then for some interpretation $\vv{a}$ in $\A$ of the list of variables $\vv{p}$, we have
\begin{equation}  \label{falsetrue}
\A\not\models\ph_h[\vv{a}] \quad\text{and}\quad \A\models\psi_j[\vv{a}] 
\end{equation}
for all $h<k$ and $j<l$. We  identify the equations $\ph_h,\psi_j$ with modal formulas and switch to dealing with modal models.

By the J\'onsson-Tarski representation theory we can take $\A$ to be a subalgebra of $\F^+$ for some  frame $\F=(W,R)$ with transitive $R$. So each member of $\A$ is a subset of $W$. Let $\M$ be any model on $\F$ such that for any variable 
$q$ the truth set $q^\M$ is in $\A$, and if
$q$ occurs in the list $\vv{p}$, then  $q^\M$ is the corresponding entry from $\vv{a}$. Then every truth set of $\M$ is in $\A$, and by \eqref{falsetrue} and \eqref{truthsetA}, for all  $h<k$ and $j<l$ we get
\begin{equation}  \label{ineqinM}
\M\not\models\ph_h \quad\text{and}\quad \M\models\psi_j.
\end{equation}
Now we apply the filtration construction of Section \ref{fmpn} to the transitive model $\M$, taking  $\Phi$ to be the closure under subformulas of the set $\{\ph_h,\psi_j:h<k\ \&\  j<l\}$, so  $\Phi$ is finite.
The algebra $\A$ belongs to $\V_n$ and so validates $\mathrm{K4}\C_n$. Hence by \eqref{truthsetA} we get
$\M\models\mathrm{K4}\C_n$ as required.

The filtration construction produces a finite transitive model $\M'=(W_\Phi,R',V_\Phi)$ of circumference at most $n$, such that for all $\ph\in\Phi$ and $x\in W$,
\begin{equation*} 
\text{$\M,x\models \ph$ \quad iff \quad $\M',\ab{x}\models \ph$. }
\end{equation*}
 by \eqref{filtlem} and \eqref{truth}. Since the map $x\mapsto\ab{x}$ is surjective from $W$ to $W'$, this implies that
 $$
\text{$\M\models \ph$ \quad iff \quad $\M'\models \ph$ }
$$
for all $\ph\in\Phi$. Applying this to \eqref{ineqinM} gives that
for all  $h<k$ and $j<l$ we have
\begin{equation}  \label{Mdashhj}
\M'\not\models\ph_h \quad\text{and}\quad \M'\models\psi_j.
\end{equation}
Now let $\A'$ be the complex algebra $(W_\Phi,R')^+$. Since the frame $(W_\Phi,R')$ belongs to $\CC_n$ we have $\A'\in\CC_n^+$.  Interpret each variable $q$ from $\vv{p}$ as the element $q^{\M'}$ of $\A'$ to get an interpretation $\vv{b}$ of $\vv{p}$ in $\A'$. Combining \eqref{truthsetA} for $\A'$ and $\M'$  with \eqref{Mdashhj} then gives 
$$
\A'\not\models\ph_h[\vv{b}] \quad\text{and}\quad \A'\models\psi_j[\vv{b}] 
$$
for all $h<k$ and $j<l$. Hence $\A'\not\models\forall \vv{p}\sig_i$, and so 
$\A'\not\models\forall \vv{p}(\bigwedge_{i<m}\sig_i)$.

Altogether we have shown that if any universal sentence is falsifiable in $\A$ then it is falsifiable in some member $\A'$ of 
$\CC_n^+$, hence if it is satisfied by  $\CC_n^+$ then it is satisfied by $\A$. As explained at the beginning, this is enough to prove the theorem.
\end{proof}
It follows from this theorem that any member of $\V_n$, i.e.\  any model of the equational theory of $\CC_n^+$, must be a model of the universal theory of $\CC_n^+$. This phenomenon is not special to the $\V_n$'s. It will occur for many varieties that can be shown to be generated by their finite members by the kind of methods we have used here.

{ 
\begin{subsubsection}*{\small Acknowledgement}
\small
I thank Ian Hodkinson and an anonymous referee for helpful comments and suggestions.
\end{subsubsection}
}


\end{document}